\documentclass[11pt]{article}
\usepackage{mathrsfs}
\usepackage{amssymb}
\usepackage{amsmath}
\usepackage{amsthm}
\usepackage{amscd}
\usepackage{epstopdf}
\usepackage{enumerate}
\usepackage{hyperref}
\usepackage{listings}

\textwidth = 6.5 in
\textheight = 9 in
\oddsidemargin = 0.0 in
\evensidemargin = 0.0 in
\topmargin = 0.0 in
\headheight = 0.0 in
\headsep = 0.0 in
\parskip = 0.2in
\parindent = 0.0in
\newtheorem{theorem}{Theorem}[section]
\newtheorem{conjecture}{Conjecture}

\newtheorem{lemma}[theorem]{Lemma}
\newtheorem{proposition}[theorem]{Proposition}

\newtheorem{definition}[theorem]{Definition}

\theoremstyle{definition}

\newcommand{\D}{\mathbb{D}}

\newcommand{\Q}{\mathbb{Q}}
\newcommand{\Z}{\mathbb{Z}}
\newcommand{\R}{\mathbb{R}}
\newcommand{\C}{\mathbb{C}}
\newcommand{\F}{\mathbb{F}}
\newcommand{\Fq}{\mathbb{F}_q}
\newcommand{\Fpbar}{\overline{\mathbb{F}}_p}
\newcommand{\A}{\mathscr{A}}
\newcommand{\G}{\mathscr{G}}
\newcommand{\Cl}{\mathcal{C}\ell}
\newcommand{\Ok}{\mathcal{O}}

\DeclareMathOperator{\Hom}{Hom}
\DeclareMathOperator{\End}{End}
\DeclareMathOperator{\Trace}{Trace}
\DeclareMathOperator{\Norm}{Norm}

\title{Almost ordinary abelian varieties over finite fields}
\date{\today}
\author{Abhishek Oswal and Ananth N. Shankar }

\begin{document}
\maketitle
\begin{abstract}
We provide a characterization of almost ordinary abelian varieties over finite fields, and use this characterization to provide lower bounds  for the sizes of some almost ordinary isogeny classes. 
\end{abstract}

\section{Introduction}
In his work \cite{Deligne}, Deligne provides a classification of ordinary abelian varieties over finite fields. The key ingredient required for this classification is the existence of the so-called ``canonical lift'' to characteristic zero of an ordinary abelian variety. The content of this paper is to provide a similar classification of certain abelian varieties over finite fields with odd characteristic $p$, which are \emph{almost ordinary}, and use this classification to estimate the size of certain isogeny classes. We note that Centeleghe and Stix in \cite{CJ} also have a characterization of abelian varieties which aren't necessarily ordinary (therefore generalizing \cite{Deligne}). However, their characterization is for abelian varieties over the prime field $\F_p$, and their methods do not make use of lifts to characteristic zero. 

We define a ``simple almost ordinary abelian variety'' over a finite field to be a $g$-dimensional abelian variety over a finite field which is geometrically simple and has $p$-rank equal to $g-1$, where $g \geq 2$. Honda-Tate theory implies that the endomorphism algebra of such an abelian variety equals a CM field $K$. The slope 1/2 part of $A$ corresponds to $K_{ss}$, a quadratic extension of $\Q_p$, which is contained in $K \otimes \Q_p$. We call the almost ordinary abelian variety \emph{ramified} $K_{ss}$ is a ramified extension of $\Q_p$ and \emph{inert} if otherwise. 

The main result of this paper is 

\begin{theorem}\label{class}
 Let $\mathcal{C}_h$ denote the category of simple almost ordinary abelian varieties corresponding to the Frobenius-polynomial $h$. 
 Let $\mathcal{L}_h$ denote the category of almost ordinary Deligne modules \footnote{See \S 3 and Definition \ref{almostdefn} for the precise definitions}, also corresponding to the Frobenius polynomial $h$. 
 \begin{enumerate}
     \item Suppose that $\mathcal{C}_h$ is ramified. There exist two functors $\mathfrak{T}_1,\mathfrak{T}_2$ from $\mathcal{C}_h$ to $\mathcal{L}_h$, both of which induce an equivalence of categories. 
     
     \item Suppose that $\mathcal{C}_h$ is inert. There are two full subcategories $\mathcal{C}_{1,h}$ and $\mathcal{C}_{2,h}$ of $\mathcal{C}_h$, and functors $\mathfrak{T}_i$ from $\mathcal{C}_{i,h}$ to $\mathcal{L}_h$, both of which induce an equivalence of categories. Further, the objects of the category $\mathcal{C}_h$ is a disjoint union of the objects of $\mathcal{C}_{1,h}$ and $\mathcal{C}_{2,h}$.
     
 \end{enumerate}
 
\end{theorem}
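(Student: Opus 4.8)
The plan is to follow the architecture of Deligne's argument in \cite{Deligne}, substituting for the (nonexistent) canonical lift a \emph{CM lift whose $p$-adic CM type has been rigidified by a choice at the slope $1/2$ place}. Fix once and for all an embedding $\overline{\Q}_p \hookrightarrow \C$, and write $H_v \subset \Hom(K,\C)$ for the set of embeddings inducing a given place $v$. First I would record the local structure coming from Honda--Tate theory: an object of $\mathcal{C}_h$ has endomorphism algebra a CM field $K=\Q(\pi)$ with $[K:\Q]=2g$ and $\pi$ a root of $h$, and the $p$-rank $g-1$ hypothesis forces the places of $K$ above $p$ to consist of conjugate pairs $v,\bar v$ with $\{\mathrm{ord}_v\pi,\mathrm{ord}_{\bar v}\pi\}=\{0,\mathrm{ord}_v q\}$ together with a single self-conjugate place $w$ with $\mathrm{ord}_w\pi=\tfrac12\mathrm{ord}_w q$ and $K_w=K_{ss}$ quadratic over $\Q_p$. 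Since the CM involution induces the nontrivial element of $\mathrm{Gal}(K_w/\Q_p)$, it interchanges the two embeddings in $H_w$; by the Shimura--Taniyama relation any characteristic-zero lift of $A\in\mathcal{C}_h$ with CM by an order in $K$ has $p$-adic CM type $\Phi$ satisfying $\#(\Phi\cap H_v)=[K_v:\Q_p]\cdot(\text{slope of }A\text{ at }v)$ for all $v\mid p$, so $\Phi$ lies in a set $\{\Phi_1,\Phi_2\}$ of exactly two CM types which agree at every ordinary place and differ at $w$.

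Next I would construct the functors. For $A$ in $\mathcal{C}_h$ (resp. $\mathcal{C}_{i,h}$) I claim there is a lift $\widetilde{A}$ over the ring of integers of a finite extension of $\Q_p$, carrying an action of $R:=\End(A)$ inducing $\Phi_i$, unique up to $R$-equivariant isomorphism; then $\mathfrak{T}_i(A):=\big(H_1(\widetilde{A}_{\C},\Z),\,F\big)$, where $F$ is the action of the $q$-Frobenius $\pi\in R$. The existence and rigidity of $\widetilde{A}$ is the technical heart. Using Serre--Tate theory I would reduce deforming $(A,R)$ to deforming $A[p^\infty]$ with its $\mathcal{O}_K\otimes\Z_p$-action: the ordinary part is rigidified by the $\mathcal{O}$-action exactly as in \cite{Deligne}, while the slope $1/2$ part is a one-dimensional, height-two $p$-divisible group carrying an action of an order in the quadratic ring $\mathcal{O}_{K_w}$ --- a Lubin--Tate-type object whose $\mathcal{O}_{K_w}$-equivariant deformations with a prescribed action on the Lie algebra (the action being determined by $\Phi_i|_w$) form a single point. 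The same rigidity yields that reduction $\Hom(\widetilde{A},\widetilde{B})\to\Hom(A,B)$ is bijective, since both sides are rank-one modules over the relevant order inside the one-dimensional $K$-vector space $\Hom^0$ and a single isogeny lifts.

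Faithfulness of $\mathfrak{T}_i$ is then the injectivity of reduction on $\Hom$. For fullness, a morphism of Deligne modules commutes with $F$, hence with $K=\Q(F)$, hence preserves the Hodge filtration on $H_1\otimes\C$, which for a CM abelian variety is read off from the $K$-action via the CM type --- here it is essential that $\mathfrak{T}_i$ uses the \emph{same} $\Phi_i$ for source and target; so the morphism is one of complex abelian varieties, spreads out over $\overline{\Q}$, extends over the discrete valuation ring by the N\'eron mapping property, and reduces. For essential surjectivity, given $(T,F)\in\mathcal{L}_h$ put $K=\Q(F)$ (the hypotheses of Definition~\ref{almostdefn} guarantee that $K$ is CM of degree $2g$, that $\Phi_i$ is a genuine CM type, and that $T$ is a rank-one module over an order $\mathcal{O}\subseteq K$), form the complex CM abelian variety $\C^{\Phi_i}/\Phi_i(T)$, descend it to $\overline{\Q}$, check good reduction above $p$, and verify via the Shimura--Taniyama formula that its reduction $A$ lies in $\mathcal{C}_h$ with $q$-Frobenius $\pi$ and satisfies $\mathfrak{T}_i(A)\cong(T,F)$ and $\End(A)=\mathcal{O}$.

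It remains to explain the dichotomy between the two assertions, which I expect to be the main obstacle, as it is precisely the phenomenon with no counterpart in the ordinary theory. In the inert case $K_w=\Q_{p^2}$, so the slope $1/2$ part of $A[p^\infty]$ carries a $W(\F_{p^2})$-action and its one-dimensional Lie algebra is an $\F_q$-line on which $\F_{p^2}$ acts through one of the two embeddings $\F_{p^2}\hookrightarrow\F_q$; this embedding is a well-defined invariant of $A$, cutting $\mathcal{C}_h$ into the disjoint union $\mathcal{C}_{1,h}\sqcup\mathcal{C}_{2,h}$, and the Shimura--Taniyama formula matches it with the choice $\Phi_1$ versus $\Phi_2$, so $\mathfrak{T}_i$ is defined precisely on $\mathcal{C}_{i,h}$. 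In the ramified case $\mathcal{O}_{K_w}$ has residue field $\F_p$, there is no such invariant, and one must instead show directly --- by the deformation-theoretic analysis above, now tracking that $\Phi_1$ and $\Phi_2$ reduce to the same special fibre --- that \emph{both} lifts $\widetilde{A}_1,\widetilde{A}_2$ exist for every $A\in\mathcal{C}_h$, giving two functors defined on all of $\mathcal{C}_h$. Establishing, uniformly over $\mathcal{C}_h$, exactly which CM types are realized by reductions in each of the two cases is the crux of the argument.
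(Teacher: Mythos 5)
Your proposal is correct in outline and shares the paper's architecture — a lift of $A$ rigidified at the slope-$1/2$ place by one of two $p$-adic CM types, the functor $A \mapsto H_1$ of the lift, and a dichotomy only in the inert case — but it executes three key steps by genuinely different means. First, where you invoke Lubin--Tate rigidity of $\mathcal{O}_{K_w}$-equivariant deformations with prescribed Lie algebra action, the paper proves the same dichotomy (Lemma \ref{ss}) by an explicit Grothendieck--Messing computation on the Dieudonn\'e module, solving a quadratic equation for the filtration; your route is cleaner, though in the ramified case you should note why the deformation theory applies over $W[\sqrt{p}]$ (here $e=2\le p-1$, so the divided-power hypothesis holds since $p$ is odd). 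Second, your invariant cutting $\mathcal{C}_h$ into $\mathcal{C}_{1,h}\sqcup\mathcal{C}_{2,h}$ — the embedding $\F_{p^2}\hookrightarrow\F_q$ through which $\mathcal{O}_{ss}$ acts on the Lie algebra of the supersingular part — is equivalent to the paper's (the parity of $\mathrm{ord}_p$ of the order of the supersingular part of the kernel of an isogeny), since quotienting by the unique order-$p$ subgroup is a Frobenius twist and swaps the embedding; either works, but you should make explicit that isogenies preserving your invariant do lift. Third, and most substantively, for essential surjectivity the paper first invokes Honda--Tate to produce some $A\in\mathcal{C}_h$ with Frobenius exactly $\pi$ and then only modifies the lattice, quotienting $\tilde{A}$ by a lifted subgroup (condition $4^*$ handling the slope-$1/2$ component), whereas you build $\C^{\Phi_i}/\Phi_i(T)$ and reduce. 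Your route obliges you to identify the $q$-Frobenius of the reduction on the nose, and the Shimura--Taniyama formula alone determines it only up to a root of unity; this is fillable, as in Deligne's original argument, but it is exactly the difficulty the paper's Honda--Tate-first construction avoids, and as written it is the one real soft spot in your sketch.
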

Unlike the case of ordinary abelian varieties, there is no unique canonical functor between $\mathcal{C}_h$ and $\mathcal{L}_h$. This is because there two different (and equally canonical) choices for a CM type on $K = \Q[x]/(h(x))$. Indeed, (after fixing an embedding of the algebraic closure of $\Q_p$ in $\C$) there are $g-1$ complex embeddings of $K$  corresponding to the slope $0$, $g-1$ embeddings corresponding to the slope 1 and 2 embeddings corresponding to the slope $1/2$. The embeddings corresponding to the slopes 0 all must lie in one of the two sets making up the partition induced by a CM type on $K$, and the embeddings corresponding to the slopes 1 must all lie in the other set making up the partition. The two embeddings corresponding to the slope 1/2 can be partitioned into the two sets in exactly two ways, hence the ambiguity. 


One of the key steps in proving Theorem \ref{class} is to construct an analogue of the ``canonical lift'' for a simple almost ordinary abelian variety $A$. Deuring in \cite{Deuring} proved that given any endomorphism of a supersingular elliptic curve, there exists a lift to characteristic zero of this endomorphism. The $\F_q$-structure on the supersingular part of $A[p^{\infty}]$ isolates a unique rank two subalgebra of $\End(A[p^{\infty}]_{ss}\times_{\F_q}\overline{\F}_q)$, and hence  a choice of lift of $A[p^{\infty}_ss]$ to mixed characterisic (see \S 2 for details). Using Grothendieck-Messing theory, we show that there is a canonical choice of lift in the inert case, and two equally canonical choices of lift in the ramified case. That there are two different choice of lift in the ramified case corresponds to the ambiguity in picking a CM type on $K$. In the inert case, even though there is a canonical choice of lifting, the association of the canonical lift to $A$ is \emph{not} functorial in $A$. It \emph{is} functorial if we restrict to any one subcategory mentioned in the statement of Theorem \ref{class}. This can be interpreted by saying that choosing a CM type on $K$ is equivalent to choosing one of the two subcategories. See Section 3 for more details. 

\subsection*{Application to estimating the size of principally polarized isogeny classes}
Our characterization of simple almost ordinary abelian varieties is robust enough to deal with polarizations and duals (see Section 4). We make the following conjecture: 
\begin{conjecture}
Let $\mathcal{C}_h$ denote an almost ordinary isogeny class defined over $\F_q$. Then, for a positive proportion of integers $n$, the number of principally polarized $\F_{q^n}$-rational abelian varieties in $\mathcal{C}_h$ is $q^{n(\dim \mathcal{A}_g -1)(1/2 + o(1))}$. Further, the right hand side is an upper-bound for all $n$. 
\end{conjecture}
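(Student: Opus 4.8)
The plan is to reduce the count to the arithmetic of the CM field $K=\Q[x]/(h(x))$ via Theorem~\ref{class}, express it as a ``mass'', and evaluate the mass as a product of local terms in which only the place $p$ carries genuinely new information. By Theorem~\ref{class} together with the treatment of polarizations and duals in Section~4, a principally polarized $\F_{q^n}$-rational abelian variety in $\mathcal{C}_h$ corresponds, up to the bounded ambiguity of the functors $\mathfrak{T}_i$ and a bounded multiplicity ($1$ in the ramified, $2$ in the inert case), to an isomorphism class of pairs $(\Lambda,\langle\,,\,\rangle)$, where $\Lambda$ is an almost ordinary Deligne module with Frobenius polynomial $h_n$ (the polynomial whose roots are the $n$-th powers of those of $h$)---concretely a fractional $\Z[\pi^n,q^n/\pi^n]$-ideal in $K$ carrying the local datum at the slope-$1/2$ prime prescribed by Definition~\ref{almostdefn}---and $\langle\,,\,\rangle$ is a principal polarization, realized by $\xi\in K$ with $\overline{\xi}=-\xi$, positive for the chosen CM type, and $\xi\Lambda=\Lambda^{\vee}$ (the trace-dual lattice). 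The automorphism group of such a pair injects into the finite group $\{u\in\Ok_K^{\times}:u\overline{u}=1\}$ of roots of unity of $K$, hence is bounded uniformly in $q$ and $n$; so the quantity we want is, up to a factor $q^{o(n)}$, the mass $m(\mathcal{C}_h,n)=\sum_{(B,\lambda)}\operatorname{Aut}(B,\lambda)^{-1}$, the sum over isomorphism classes.

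Next I would evaluate $m(\mathcal{C}_h,n)$ by the Siegel mass formula (equivalently, the Langlands--Kottwitz description of the isogeny class) as a product over all places of $\Q$ of local densities / orbital integrals of $\pi^n$. The archimedean factor together with the factors at primes $\ell\neq p$ gives the ``ordinary-size'' contribution: it equals $q^{n\dim\mathcal{A}_g(1/2+o(1))}$, the dependence on $n$ being governed by the Weyl discriminant of $\pi^n$ and by the discriminant, conductor, and unit index of $\Z[\pi^n,q^n/\pi^n]$, for which one uses a Dirichlet-type class number formula together with Mertens-type and Brauer--Siegel-type estimates; here the unit indices of $\Z[\pi^n,q^n/\pi^n]$ and of its real suborder essentially cancel, so this contribution is robustly $q^{o(n)}$-controlled. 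The key point---and the only place where Theorem~\ref{class} is really used---is the local term at $p$. There the relevant local group is governed by $K_{ss}$; the slope-$1/2$ part of the Dieudonn\'e lattice at $\mathfrak{p}$ is rigidified by the $\F_{q^n}$-structure exactly as in \S2, only the finitely many shapes supplied by Theorem~\ref{class} (in its ramified and inert forms) being permitted, and $v_{\mathfrak p}(\pi^n-q^n/\pi^n)$ on the slope-$1/2$ factor equals $\tfrac12 v_{\mathfrak p}(q^n)+O(1)$ for a positive density of $n$. Feeding this into the local density computation yields a corrective factor $q^{-n(1/2+o(1))}$ relative to the ordinary count---this is exactly the $-1$ in $\dim\mathcal{A}_g-1$, and is the mass-formula avatar of the fact that the almost ordinary locus has codimension $1$ in $\mathcal{A}_g$ (equivalently, that the slope-$1/2$ condition forces $p^{\sim n/2}$ to divide the middle coefficient of the Weil polynomial). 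Multiplying the factors gives $m(\mathcal{C}_h,n)=q^{n(\dim\mathcal{A}_g-1)(1/2+o(1))}$ for a positive density of $n$; carried out with inequalities in place of asymptotics (the local terms are bounded above unconditionally), the same computation gives the upper bound for every $n$.

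For the lower bound one can also argue concretely, bypassing the orbital-integral machinery: fix a principally polarizable almost ordinary Deligne module $\Lambda_0$ for $h_n$ (e.g.\ $\Lambda_0=\Ok_K$, or a small-index $\Z[\pi^n,q^n/\pi^n]$-stable lattice chosen so that $\Lambda_0\overline{\Lambda_0}\,\mathfrak{d}_K$ is trivial in the relevant ray class group and the slope-$1/2$ condition of Definition~\ref{almostdefn} is met), and count the fractional ideals $\mathfrak{a}$ with $\mathfrak{a}\overline{\mathfrak{a}}$ in the prescribed class, carrying the prescribed local structure at $p$ and a CM-positive polarizing element. A count of such ideals of bounded norm---by geometry of numbers in the trace form on $\Z[\pi^n,q^n/\pi^n]$, a lattice of covolume $\asymp\sqrt{|\operatorname{disc}\Z[\pi^n,q^n/\pi^n]|}$, cut down by the norm-to-$K^{+}$ and slope-$1/2$ conditions, together with equidistribution of the ideal classes---produces $\gg q^{n(\dim\mathcal{A}_g-1)(1/2-\varepsilon)}$ of them, provided the prescribed ideal class is actually hit with the expected frequency; that last point is a Chebotarev/Linnik-type equidistribution statement for the ray class group of $\Z[\pi^n,q^n/\pi^n]$, and is exactly why the conclusion is asserted only along a positive density of $n$.

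The main obstacle, on either route, is the precise evaluation of the local term at $p$ and the extraction of the constant $1/2$ on the nose, rather than merely up to a bounded power of $q$: this forces one to use the fine structure of almost ordinary Deligne modules from Theorem~\ref{class}---the ramified-versus-inert dichotomy, and the way the two CM types (the two functors $\mathfrak{T}_i$) parametrize the local data at $p$---and not just the bare existence of the equivalence of categories. The secondary, analytic obstacle is to exclude, along a positive density set of $n$, an anomalous collapse of the mass caused either by $v_{\mathfrak p}\big((\pi/\overline{\pi})^{n}-1\big)$ being unexpectedly large on the slope-$1/2$ factor or by an unexpected correlation between principal polarizability and the slope-$1/2$ local condition; such collapses do occur for sparse sets of $n$, which is why a density-one statement should not be expected and the conjecture is phrased with ``positive proportion''.
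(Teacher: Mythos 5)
The statement you are addressing is stated in the paper as a conjecture, and the paper itself proves only a partial result (Theorem \ref{lastmain}: the lower bound, for a positive proportion of $n$, under the additional hypothesis that the Frobenius torus has full rank). Your proposal reads as a programme for the full conjecture, but it does not close the gaps that make this a conjecture rather than a theorem. On the mass-formula route, every genuinely hard step is asserted rather than proved: the evaluation of the local density at $p$ is said to ``yield a corrective factor $q^{-n(1/2+o(1))}$'', the unit indices ``essentially cancel'', and the control of $\#\Cl(R_n)/\#\Cl^+(R_n^+)$ is invoked via Brauer--Siegel-type estimates without addressing that for the varying orders $\Z[\pi^n,q^n/\pi^n]$ such input is only available as an upper bound plus an ineffective lower bound. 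In particular the claimed ``upper bound for all $n$'' does not follow from what you write: one must also bound the number of principal polarizations carried by a single Deligne module and rule out that many classes fail to be principally polarizable, neither of which your local-density sketch supplies.

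The concrete route has a more specific gap: you ``fix a principally polarizable almost ordinary Deligne module $\Lambda_0$ (e.g.\ $\Lambda_0=\Ok_K$\dots)'', but the existence of even one self-dual lattice with endomorphism ring exactly $R_n$ is the crux of the lower bound and is where the paper does real work. The argument there computes the trace-dual of $R_n$ with respect to the pairing $\Trace(\lambda_n x\bar y)$, shows it equals $\mathfrak{b}^{m}$ for the slope-$1/2$ prime $\mathfrak{b}$, and then splits into the ramified case (where $m$ is forced to be even because polarization degrees are perfect squares) and the inert case (where one replaces $A$ by an isogenous $B$ with $B/G=A$, $G$ the \'etale $p$-torsion, to absorb the obstruction); none of this appears in your sketch. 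You also misattribute the ``positive proportion'' restriction to an equidistribution or anomalous-collapse phenomenon: in the paper it comes from the CM-positivity condition on $\lambda_n$, which holds for a proportion $1/2^{g}$ of $n$, while the class-number asymptotic (Lemma \ref{size}) already holds for density one. As written, the proposal is a plausible outline of why one believes the conjecture, not a proof of it.
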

This agrees with Conjecture 3.1 of \cite{AJ}. Indeed, the quantity $\dim \mathcal{A}_g$ in \cite{AJ} is replaced by $\dim \mathcal{A}_g -1$ because Newton Stratum consisting of almost ordinary abelian varieties is codimension 1 in $\mathcal{A}_g$, (as opposed to being of codimension 0, as in the ordinary case). We establish the lower bound for a positive proportion of $n$, for isogeny classes where the associated Frobenius Torus (see \cite[Section 3a]{Chi} for the definition of Frobenius torus) has maximal rank. The precise result is stated as Theorem \ref{lastmain}.

\subsection*{Plan for the rest of the paper}
We construct the canonical lift(s) in Section 2. We prove Theorem \ref{class} in section 3, and address the matter of polarizations in section 4. We apply the results of Sections 3 and 4 to establish lower bounds in Section 5. 
\subsection*{Acknowledgements} 
It is a pleasure to thank Bjorn Poonen, Yunqing Tang and Jacob Tsimerman for helpful discussions. We are also very grateful to Steven Kleiman and Rahul Singh for answering our questions pertaining to Gorenstein rings.

\section{The canonical lift}
In this section, we will construct the ``canonical lift'', characterized by property that every endomorphism lifts. Further, this will be the unique (or the two unique) CM lifts to a slightly ramified extension\footnote{See Definition \ref{slightram} for the definition of slightly ramified} of $W:=W(\overline{\F}_p)$. We will first need some preliminary results about the endomorphism rings of almost ordinary abelian varieties. 

\subsection{Endomorphism rings}

Let $A$ denote an almost ordinary abelian variety over $\Fq$, and let $\G$ denote its $p$-divisible group. Let $R = \End(\A)$, and let $S = \End(\G)$ (note that $S$ is the endomorphism ring of $\G$ over $\Fq$, and not $\Fpbar$). Tate's theorem states that $S = R \otimes_{\Z}\Z_p$. We have the following result:

\begin{proposition}\label{max}
The $\Z_p$-algebra $S$ is of the form $S_{et} \oplus S_{tor} \oplus S_{ss}$ according to the decomposition of $\G = \G_{et} \times \G_{tor} \times \G_{ss}$. Further, if $A$ is geometrically simple, then $R$ is an order inside a CM field of degree $2g$. Consequently, $S_{ss}$ is a 2-dimensional $\Z_p$-algebra. Further, $S_{ss}$ is the maximal order in its field of fractions. 
\end{proposition}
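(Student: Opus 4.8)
The plan is to prove the four assertions in sequence; the substance is concentrated in the Honda--Tate step and in the maximality of $S_{ss}$.

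\textbf{Decomposition of $S$.} Over the perfect field $\Fq$ the connected--\'etale sequence of $\G$ splits, and dually so does the maximal multiplicative subgroup $\G_{tor}$; the complementary factor $\G_{ss}$ is local--local of height $2g-2(g-1)=2$, hence -- being geometrically the unique local--local $p$-divisible group of height $2$ -- is isoclinic of slope $1/2$. Thus $\G=\G_{et}\times\G_{tor}\times\G_{ss}$ with factors isoclinic of slopes $0$, $1$, $1/2$. Any homomorphism of $p$-divisible groups preserves Newton slopes, so $\Hom$ between two distinct factors is a torsion-free group that vanishes after $\otimes\Q_p$, hence vanishes; therefore $S=\End(\G)=\End(\G_{et})\oplus\End(\G_{tor})\oplus\End(\G_{ss})$, which is the claimed decomposition with $S_{ss}=\End(\G_{ss})$.

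\textbf{$R$ is an order in a CM field of degree $2g$.} Being geometrically simple, $A$ is $\Fq$-simple, so by Honda--Tate $E:=\End^{0}(A)$ is a division algebra with centre $K:=\Q(\pi)$ ($\pi$ the Frobenius), $d:=[E:K]^{1/2}$ is a positive integer, and $2g=d\cdot[K:\Q]$. First, $K$ is CM: by the Weil bound $|\iota(\pi)|=\sqrt{q}$ for every embedding $\iota\colon K\hookrightarrow\C$, so $\pi\mapsto q/\pi$ is the automorphism of $K$ induced by complex conjugation in each $\iota$; it is nontrivial (otherwise $\pi=\pm\sqrt{q}$, forcing $A$ supersingular, contrary to almost ordinarity), its fixed field $\Q(\pi+q/\pi)$ is totally real, and $K$ is totally imaginary over it. Second, for each place $v\mid p$ one has $\operatorname{inv}_{v}E\equiv\lambda_{v}[K_{v}:\Q_p]\pmod{\Z}$, where the rationals $\lambda_{v}$, each occurring with multiplicity $d[K_{v}:\Q_p]$, are precisely the Newton slopes of $A$: namely $0$ and $1$ with multiplicity $g-1$ each and $1/2$ with multiplicity $2$. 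Hence $\operatorname{inv}_{v}E=0$ whenever $\lambda_{v}\in\{0,1\}$, while $d\cdot\sum_{\lambda_{v}=1/2}[K_{v}:\Q_p]=2$. If $d=2$ there is a single slope-$1/2$ place $v_{0}$ with $K_{v_{0}}=\Q_p$ and $\operatorname{inv}_{v_{0}}E=1/2$, and all other invariants of $E$ vanish (no real places, as $K$ is CM); this contradicts $\sum_{v}\operatorname{inv}_{v}E=0$ in $\Q/\Z$. Therefore $d=1$, i.e.\ $E=K$ is a CM field of degree $2g$, and $R$ is an order in it.

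\textbf{$S_{ss}$ has $\Z_p$-rank $2$.} Now $V_{p}A$ is free of rank $1$ over $K\otimes_{\Q}\Q_p=\prod_{v\mid p}K_{v}$, so $\G=\prod_{v\mid p}A[v^{\infty}]$ with $A[v^{\infty}]$ of height $[K_{v}:\Q_p]$ and slope $v(\pi)/v(q)$; matching this with the Newton decomposition, $\G_{ss}$ is the product of the $A[v^{\infty}]$ at the slope-$1/2$ places, of total height $2$. Hence $S_{ss}=\End(\G_{ss})=(R\otimes\Z_p)_{ss}$ is a full $\Z_p$-order in $\prod_{\lambda_{v}=1/2}K_{v}$, a $\Q_p$-algebra of dimension $2$, so $S_{ss}$ is a $2$-dimensional $\Z_p$-algebra.

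\textbf{$S_{ss}$ is maximal (the main point).} Over $\Fpbar$, $\G_{ss}$ becomes isomorphic to the $p$-divisible group of a supersingular elliptic curve (the unique local--local $p$-divisible group of height $2$), whose endomorphism ring is the maximal order $\Ok_{D}$ in the quaternion division algebra $D/\Q_p$. By Galois descent, $S_{ss}=\End_{\Fq}(\G_{ss})$ is the centralizer in $\Ok_{D}$ of the geometric $q$-Frobenius endomorphism $\mathrm{Fr}_{q}$ of $\G_{ss}\times_{\Fq}\Fpbar$. If $\mathrm{Fr}_{q}$ were central in $D$ it would act as a scalar, whence $S_{ss}=\Ok_{D}$, contradicting the rank computation above; so $L:=\Q_p(\mathrm{Fr}_{q})$ is a maximal (degree-$2$) subfield of $D$. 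The centralizer in $D$ of a maximal subfield of a quaternion division algebra is that subfield itself, so $S_{ss}=\Ok_{D}\cap L=\Ok_{L}$ is the maximal order in $L=\operatorname{Frac}(S_{ss})$. The one genuinely delicate point in all of this is the Honda--Tate step -- ruling out that $E$ is a quaternion algebra over a degree-$g$ field -- which the sum-of-invariants relation handles precisely because $K$ is totally imaginary; the remaining assertions are then formal, given the classical description of $\End$ of the supersingular $p$-divisible group and the fact that $\End_{\Fq}$ is the centralizer of Frobenius.
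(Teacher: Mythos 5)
Your proof is correct, and it diverges from the paper's at the two substantive points. For ruling out the quaternionic case, the paper argues elementarily: if $h=f^2$ then $f_{ss}$ is linear, so its root $\pi$ satisfies $\pi=q/\pi$, i.e.\ $\pi=\pm\sqrt q$, contradicting geometric simplicity after base change to $\F_{q^2}$; you instead invoke the full Honda--Tate invariant formula and the vanishing of $\sum_v \operatorname{inv}_v E$ in $\Q/\Z$. Both are valid (your sum is $1/2\neq 0$ when $d=2$ because the unique slope-$1/2$ place would have $[K_{v_0}:\Q_p]=1$, while all other invariants vanish mod $\Z$); the paper's version is more self-contained, yours is the ``standard'' computation and makes visible exactly which Newton-polygon feature forces commutativity. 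For maximality of $S_{ss}$, the routes are genuinely different: the paper passes to an isogenous $A'$ with maximal endomorphism ring and transports the action back along the isogeny, using that the connected one-dimensional group $\G'_{ss}$ has a unique subgroup of each order, so every isogeny kernel is automatically stable; you identify $S_{ss}$ intrinsically as the centralizer of the $q$-Frobenius in the maximal order $\Ok_D$ of $\End(\G_{ss}\times\Fpbar)\otimes\Q_p$, and use the double-centralizer theorem plus the fact that $\Ok_D\cap L=\Ok_L$ for any quadratic subfield $L$. Your argument buys something extra for free: it shows directly that $S_{ss}\otimes\Q_p$ is a \emph{field} (a commutative subalgebra of a division algebra), a point the paper only settles later in the discussion of condition $3^*$, and it avoids the slightly delicate bookkeeping of isogeny directions in the paper's proof. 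Conversely, the paper's argument uses less machinery (no descent of endomorphisms, no quaternion orders). One small remark: in your third step, the assertion that $S_{ss}$ is an order in $\prod_{\lambda_v=1/2}K_v$ does not yet preclude that product being $\Q_p\times\Q_p$; you should flag that this possibility is eliminated only by the centralizer argument in the final step, which is where the phrase ``its field of fractions'' becomes legitimate.
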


\begin{proof}
As $\F_q$ is a perfect field, every $p$-divisible group is a product of its etale, toric and local-local components, and so the first assertion follows. 

In order to prove the rest it suffices to treat the case where $A$ is simple. Let $\pi_A$ denote the Frobenius endomorphism of $A$. Let $K = Q(\pi_A)$ and $L = R \otimes \Q$. We will prove that $K = L$ in order to show that $S_{ss}$ is a rank 2 $\Z_p$-algebra. Let $h(x)$ and $f(x)$ denote the characterstic polynomial and minimal polynomial of $\pi_A$ respectively. As $A$ is simple and almost ordinary, $h(x)$ is either $f(x)$, or $f(x)^2$ according to whether $R$ is commutative or not. 

We now show that $f(x) = h(x)$. Indeed, let $f(x) = f_{et}(x) \cdot f_{tor}(x) \cdot f_{ss}(x)$ over $\Q_p$ according to the slope decomposition of $A$, and similarly let $h = h_{et} \cdot h_{tor} \cdot h_{ss}$ over $\Q_p$. If $h(x) = f(x)^2$, then $f_ss$ is a degree one polynomial, and so let $\pi$ denote its root. This implies that $h_{ss} = (x - \pi)^2$. On the other hand, the product of the roots of $h_{ss}$ is $q$, and so $\pi = q / \pi$, which implies that $\pi = \pm q^{1/2}$. However, this implies that the minimal polynomial of Frobenius of $A \times_{\F_q}\F_{q^2}$ is reducible over $\Q$, which contradicts the assumption that $A$ is geometrically simple. It follows that $h(x) = f(x)$, and therefore that $R$ and $S_{ss}$ are commutative.

It remains to prove that $S_{ss}$ is the maximal order in its field of fractions. Let $A'$ denote an abelian variety over $\F_q$ isogenous to $A$ such that $\End(A')$ is the maximal order in its quotient algebra and denote by $\G'$ its $p$-divisible group. Let $S'_{ss}$ be the endomorphism ring of $\G'_{ss}$. Let $i: A_0 \rightarrow A$ be an isogeny, and let $j$ denote the associated map from $\G_0$ and $\G$. Clearly, $j$ breaks up as $j_{ord} \times j_{ss}$. It suffices to show that $S'_{ss}$ preserves the kernel of $j_{ss}$. Indeed, $\G'_{ss}$ is a connected one-dimensional group, and hence $\G'_{ss}$ has a unique subgroup of order $p^m$ for every positive integer $m$. Therefore, it follows that $S'_{ss}$ preserves the kernel of $j_{ss}$ and the result follows. 

\end{proof}

We will now use Grothendieck-Messing theory to prove Deuring's lifting theorem. We first define the notion of a ``slightly ramified'' extension, as Grothendieck-Messing theory does not apply when the degree of ramification is large. 
\begin{definition}\label{slightram}
Let $L$ denote a finite extension of $W[1/p]$. We say that $L$ and $\Ok_L$ are slightly ramified if the degree $[L:W[1/p]]$ is at most $p-1$.
\end{definition}

\begin{lemma}\label{ss}
Let $\G_s$ denote a one-dimensional supersingular $p$-divisible group over $\Fpbar.$ Suppose $\Ok \subset \End(\G_s)$ denote an integrally closed rank two $\Z_p$ algebra. Then: 
\begin{enumerate}
    \item If $\Ok$ is unramified, there exists a unique lift of $\G_s$ to $W$ such that the action of $\Ok$ lifts. 
    
    \item If $\Ok$ is ramified, then there exist two lifts of $\G_s$ to $W[\sqrt{p}]$ such that the action of $\Ok$ lifts.
\end{enumerate}
In either of the above cases, the lifts described are the only ones to a slightly ramified extension of $W$ such that the action of $\Ok$ lifts. 

\end{lemma}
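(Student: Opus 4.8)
The plan is to use Grothendieck–Messing deformation theory to parametrize lifts of $\G_s$ together with the $\Ok$-action. Recall that $\G_s$ is a one-dimensional supersingular (hence connected, formal) $p$-divisible group of height $2$, so its (contravariant) Dieudonné module $D$ is free of rank $2$ over $W = W(\Fpbar)$. By Grothendieck–Messing theory, for an object $R \to \Fpbar$ in the relevant category of nilpotent (or, more generally, divided-power) thickenings with target a slightly ramified $W$-algebra such as $\Ok_L$, lifts of $\G_s$ to $R$ are classified by lifts of the Hodge filtration $\mathrm{Fil}^1 \subset D \otimes_W \Fpbar$ to a direct summand $\widetilde{\mathrm{Fil}}^1 \subset D \otimes_W R$ which is a free $R$-module of rank $1$ (the dimension of $\G_s$); the slight ramification hypothesis $[L : W[1/p]] \le p-1$ is exactly what guarantees that $p\Ok_L$ admits the requisite divided-power structure, so Grothendieck–Messing applies. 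A lift equipped with an $\Ok$-action is then a lift for which $\widetilde{\mathrm{Fil}}^1$ is stable under the induced $\Ok$-action on $D \otimes_W R$.

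First I would record that, since $\Ok$ acts on $D$ by $W$-linear endomorphisms and $D$ is free of rank $2$, the action makes $D$ a module over $\Ok \otimes_{\Z_p} W$; because $\Ok$ is an integrally closed (hence maximal, étale or ramified) rank-$2$ $\Z_p$-algebra, $\Ok \otimes_{\Z_p} W$ is either $W \times W$ (unramified case) or $W[\sqrt{p}]$ (ramified case, using $p$ odd), and in both cases $D$ is free of rank $1$ over $\Ok \otimes_{\Z_p} W$. Next, in the \emph{unramified} case, $\Ok \otimes W \cong W \times W$ gives an eigenspace decomposition $D = D_1 \oplus D_2$ with each $D_i$ free of rank $1$ over $W$; reducing mod $p$, the line $\mathrm{Fil}^1 \subset D \otimes \Fpbar$ is $\Ok$-stable (any lift must be $\Ok$-stable, and $\mathrm{Fil}^1$ is the reduction of an $\Ok$-stable lift only if it is itself a sum of eigenlines — here I would note that $\mathrm{Fil}^1$ must equal $D_i \otimes \Fpbar$ for one of $i = 1,2$, since it is $\Ok$-stable of dimension $1$; supersingularity forces which one, or rather the two choices correspond to the two slope-$1/2$ embeddings). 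The unique $\Ok$-stable lift over any slightly ramified $\Ok_L$ is then $\widetilde{\mathrm{Fil}}^1 = D_i \otimes_W \Ok_L$; one checks this is the only $\Ok$-stable rank-$1$ summand lifting $\mathrm{Fil}^1$, which gives uniqueness, and it is already defined over $W$ itself, giving part (1). In the \emph{ramified} case, write $\Ok \otimes W \cong W[\pi]$ with $\pi^2 = p \cdot u$ (unit $u$), so $D$ is free of rank $1$ over $W[\pi]$; over $\Ok_L = W[\sqrt p]$ the algebra $\Ok \otimes_{\Z_p} \Ok_L$ has $\pi$ becoming a uniformizer and $D \otimes_W \Ok_L$ is free of rank $1$ over this. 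An $\Ok$-stable rank-$1$ $\Ok_L$-summand of $D \otimes_W \Ok_L$ reducing to $\mathrm{Fil}^1$ must be a $W[\pi]\otimes \Ok_L$-submodule; I would enumerate these — the key computation is that requiring $\Ok_L$-freeness of rank $1$ together with $\pi$-stability pins the summand down to lie in the preimage of $\mathrm{Fil}^1$, and there are exactly two such (corresponding to the two square roots of $p$, equivalently the two ways of extending the filtration compatibly with the $\pi$-action over the ramified base), giving the two lifts of part (2). Finally, for the last sentence, I would show no lift to a slightly ramified $\Ok_L$ other than those listed can carry the $\Ok$-action: any such lift's filtration is an $\Ok$-stable rank-$1$ $\Ok_L$-summand reducing to $\mathrm{Fil}^1$, and the module-theoretic classification above shows the complete list of these is as claimed, independent of which slightly ramified $L$ we allow (in the unramified case $L = W[1/p]$ suffices; in the ramified case one genuinely needs $L \supseteq W[\sqrt p]$ for the filtration to exist at all, and then exactly two occur).

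The main obstacle I anticipate is the linear-algebra bookkeeping in the ramified case: one must carefully track the $\Ok \otimes_{\Z_p} \Ok_L$-module structure on $D \otimes_W \Ok_L$ (this ring is not a product of fields but a local ring with nilpotents after reduction), and verify that the condition ``$\widetilde{\mathrm{Fil}}^1$ is a rank-$1$ free $\Ok_L$-direct summand \emph{and} $\pi$-stable'' cuts the set of lifts down to precisely two — neither zero nor more. A secondary point requiring care is confirming that the divided-power/nilpotence hypotheses of Grothendieck–Messing are genuinely met for $\Ok_L$ slightly ramified with $p$ odd (so that $2$ is invertible and $p\Ok_L$ has the standard PD-structure), which is exactly where Definition \ref{slightram} is used; I would state this explicitly rather than leave it implicit.
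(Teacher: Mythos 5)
Your proposal is correct and rests on the same engine as the paper's proof---Grothendieck--Messing applied to the divided-power thickening $\Ok_L \rightarrow \Fpbar$ (legitimate precisely because of the slight-ramification bound), so that lifts carrying the $\Ok$-action correspond to $\Ok$-stable rank-one cotorsion-free summands of $\D \otimes_W \Ok_L$ lifting the Hodge line---but your execution of the counting step is genuinely different. The paper picks the standard basis $e_1,e_2$ of the Dieudonn\'e module with $Fe_1 = e_2$, $Fe_2 = pe_1$, writes the generator of $\Ok$ as an explicit trace-zero matrix $M_{a,b}$ with $a,b \in W(\F_{p^2})$, translates maximality of $\Ok$ into ``$a$ or $b$ is a $p$-adic unit'' (unramified if and only if $a$ is a unit), and reduces the count to the number of roots of positive valuation of the quadratic $b\lambda^2 + (\sigma(a)-a)\lambda - p\sigma(b)=0$, settled by a Newton-polygon/valuation argument: one root in $W$ in the unramified case, two roots of valuation $1/2$ (hence in $W[\sqrt{p}]$) in the ramified case. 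You instead use that $\D$ is free of rank one over $\Ok\otimes_{\Z_p}W$ and classify stable summands module-theoretically, which is coordinate-free and silently handles the degenerate case $b=0$. The one step you defer---the enumeration in the ramified case---does close: writing $\Ok\otimes_{\Z_p}W = W[\pi]$ with $\pi^2 = pc$ for a unit $c$, a $\pi$-stable rank-one free summand generated by $x$ must satisfy $\pi x = \gamma x$ with $\gamma \in \Ok_L$ and $\gamma^2 = pc$, so $\gamma = \pm\sqrt{pc}$; this forces $\sqrt{p}\in \Ok_L$ (hence no lifts over $W$ or any unramified base), and the resulting summands are exactly $\Ok_L\cdot(\pi\mp\gamma)$, both unimodular and both reducing to the Hodge line $\langle \bar{\pi}\rangle$. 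In short, your outline is sound and complete modulo that routine verification; the paper's coordinates buy an explicit quadratic whose roots are the deformation parameters, while your version makes the dichotomy ``two square roots of $p$ = two lifts'' structurally transparent.
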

\begin{proof}
Let $\D$ denote the Dieudonne module of $\G_s$ -- $\D$ is a free rank 2 $W$-module, equipped with a Frobenius-semilinear endomorphism which we denote by $F$. The module $\D$ has a basis $e_1,e_2$ such that $Fe_1 = e_2, Fe_2 = pe_1$. 

The endomorphisms of $\G_s$ consist of the $W$-linear endomorphisms of $\D$ which $\sigma$-commute with $F$. Every such endomorphism is easily seen to be of the form 

\[
   M_{a,b}=
  \left[ {\begin{array}{cc}
   a & p\sigma(b) \\
   b & \sigma(a) \\
  \end{array} } \right]
\]
where $a,b \in W(\F_{p^2})$. For ease of notation, we will identify the matrix $M_{a,b}$ with the endomorphism of $\G_s$ that it represents. Note that the characteristic polynomial of $M_{a,b}$ is $x^2 - (a + \sigma(a))x + (a\sigma(a) - pb\sigma(b)$. 

As $\Ok$ is a rank-two $\Z_p$ module, it is monogenic as a $\Z_p$-algebra, and so we may assume that $\Ok$ is generated by a single trace-zero endomorphism. Therefore, there exist $a,b$ such that $O = \Z_p[M_{a,b}]$, such that $\sigma(a) = -a$. Further, $\Ok$ is the maximal order in its field of fractions, which is equivalent to $\Ok$ having square-free discriminant. The discriminant of $\Ok$ equals the discriminant of the characteristic polynomial of $M_{a,b}$, which is $-4(a\sigma(a) - pb\sigma(b)$ which is squarefree if and only if at least one among $a,b$ is a $p$-adic unit. Further, $\Ok$ is unramified if and only if $a$ is a $p$-adic unit. 

Let $R$ denote the ring of integers of some slightly ramified extension of $W$. As the extension is slightly ramified, the  maximal ideal of $R$ is closed under divided powers. Therefore, Grothendieck-Messing theory applies, and yields the following statement (see \cite[Section 5, Theorem 1.6]{GM}):
Deformations to $R$ of $\G_s$ such that the action of $M_{a,b}$ also lifts are in bijection with a rank one co-torsion free submodule $Fil \subset \D \otimes_W R$, such that $Fil$ reduces to $e_2$ modulo the maximal ideal of $R$. 

We will now show that there is a unique choice of $Fil$ if $a$ is a $p$-adic unit, and that there are exactly two choices of $Fil$ otherwise. Indeed, the data of $Fil$ is the same as the data of an Eigenvector of $M_{a,b}$ of the form $e_2 + \lambda e_1$, where $\lambda$ is in the maximal idea of $R$. 

Having fixed $a,b$, a vector of the form $e_2 + e_1 \lambda$ is an eigen vector of $M_{a,b}$ if and only if $\lambda$ satisfies the quadratic equation 
\begin{equation}\label{quad}
b \lambda^2 + (\sigma(a) - a) \lambda - p\sigma(b) = 0 
\end{equation}
Note that this already proves for us the statement that there are at most two deformations of $\G_s$ to a slightly ramified extension of $W$, such that the action of $M_{a,b}$ also lifts. We will now treat the following two cases to finish the proof of this lemma:

\subsubsection*{Case 1: $a$ is a $p$-adic unit.}
We must prove that there is a unique $\lambda$ with positive $p$-adic valuation that satisfies \eqref{quad}, and that this solution lies in $W$. Indeed, the newton polygon of \eqref{quad} has a breakpoint, and so equals a product of linear factors, thereby proving that any $\lambda$ is an element of $W$. Further, the product of the two roots has $p$-adic valuation 1, and the sum of the two roots (which equals $\frac{a - \sigma(a)}{b})$ has $p$-adic valuation non-positive. This is because $\sigma(a) = -a$, and $a$ is a unit. Therefore, exactly one of the two solutions to \eqref{quad} has positive $p$-adic valuation, as required. 

\subsubsection*{Case 2: $a$ is not a $p$-adic unit.}
Recall that $b$ has to be a $p$-adic unit in this case. It follows that the discriminant of \eqref{quad} has $p$-adic valuation one, and so must be an irreducible polynomial over $W$, and thus the two roots have the same $p$-adic valuation. As the product of the roots has $p$-adic valuation 1, each of the two roots must have $p$-adic valuation $1/2$, and so must be defined over $W[\sqrt{p}]$. The lemma follows.  

\end{proof}

\subsection{Definition of the canonical lift(s)}

We will now define the canonical lift(s) of our abelian variety. 

\begin{definition}
Let $\tilde{\G}_s$ denote the lift(s) of $\G_s$ constructed in Lemma \ref{ss}. We define the canonical lift(s) $\G_{can}$ of $\G$ to be $\tilde{\G}_s \times \tilde{\G}_{et} \times \tilde{\G}_{tor}$. We define the canonical lift(s) of $A$ to be the abelian variety corresponding to $\G_{can}$ via the Serre-Tate lifting equivalence \footnote{See \cite{Drin} for a proof of the lifting equivalence due to Drinfel'd}.
\end{definition}

\begin{proposition}
The canonical lift has the property that all the endomorphisms of $A$ lift. 
\end{proposition}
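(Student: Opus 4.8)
The plan is to leverage Serre–Tate theory to reduce the lifting of endomorphisms of $A$ to the lifting of endomorphisms of the $p$-divisible group $\G$, and then to treat the three slope components separately. By the Serre–Tate lifting equivalence, an endomorphism of $A$ lifts to $A_{can}$ precisely when the corresponding endomorphism of $\G = \G_{et} \times \G_{tor} \times \G_{ss}$ lifts to $\G_{can} = \tilde\G_{et} \times \tilde\G_{tor} \times \tilde\G_{ss}$. Since $R = \End(A)$ has $R \otimes_\Z \Z_p = S = S_{et} \oplus S_{tor} \oplus S_{ss}$ by Proposition \ref{max} (Tate's theorem), it suffices to show that each of the three summands acts on the respective lifted component. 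First I would handle the étale and toric parts: over the perfect field $\Fq$, the étale and multiplicative $p$-divisible groups (and their endomorphisms) lift uniquely and canonically to $W$ by the usual equivalence with finite free $\Z_p$-modules with Galois action (resp. Cartier duality), so $S_{et}$ and $S_{tor}$ automatically act on $\tilde\G_{et}$ and $\tilde\G_{tor}$.

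The heart of the argument is the supersingular component. Here I would invoke Lemma \ref{ss}: the ring $S_{ss}$ is, by Proposition \ref{max}, an integrally closed (maximal) rank-two $\Z_p$-algebra sitting inside $\End(\G_{ss} \times_{\Fq} \Fpbar)$, and by construction $\tilde\G_{ss}$ (in the inert case) or one of the two lifts $\tilde\G_{ss}$ (in the ramified case) is exactly a lift to a slightly ramified extension of $W$ on which the $S_{ss}$-action extends. So the endomorphisms coming from $S_{ss}$ lift by design. The remaining point is that these are genuinely \emph{all} of the endomorphisms coming from $R$ that land in the supersingular part: one must check that the image of $R \otimes \Z_p$ in $\End(\G_{ss} \times \Fpbar)$ is contained in the rank-two algebra $\Ok = S_{ss}$ used in Lemma \ref{ss}, rather than the full (non-commutative, rank-four) endomorphism ring of the geometric supersingular $p$-divisible group. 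This is precisely where the $\Fq$-structure is used — $S_{ss} = \End_{\Fq}(\G_{ss})$ is the subring of $\End_{\Fpbar}$ fixed by Frobenius, and by the commutativity established in the proof of Proposition \ref{max} it is the relevant rank-two order.

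Concretely, I would argue as follows. Let $\phi \in R = \End(A)$. Under $R \hookrightarrow S = S_{et} \oplus S_{tor} \oplus S_{ss}$ write $\phi = (\phi_{et}, \phi_{tor}, \phi_{ss})$. The components $\phi_{et}$ and $\phi_{tor}$ lift to $\tilde\G_{et}$ and $\tilde\G_{tor}$ by the canonical (unique) lifting of étale and toric $p$-divisible groups over the perfect field $\Fq$ to $W$, which is functorial. For $\phi_{ss} \in S_{ss}$: since $S_{ss}$ is by Proposition \ref{max} the maximal order in a rank-two field extension of $\Q_p$, and $\tilde\G_{ss}$ was constructed in Lemma \ref{ss} precisely as a deformation to (at worst) $W[\sqrt p]$ such that the full $S_{ss}$-action lifts, the endomorphism $\phi_{ss}$ extends to $\tilde\G_{ss}$. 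Hence $\phi = (\phi_{et}, \phi_{tor}, \phi_{ss})$ lifts to $\G_{can}$, and therefore by Serre–Tate $\phi$ lifts to $A_{can}$.

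The main obstacle — and the step that warrants the most care — is making sure the generator used in Lemma \ref{ss} can be taken so that the \emph{entire} order $S_{ss}$, and not merely a single chosen trace-zero element, lifts simultaneously. Since $S_{ss}$ is monogenic as a $\Z_p$-algebra (a rank-two $\Z_p$-algebra that is a domain), it is generated by one element, and the chosen $Fil$ in Lemma \ref{ss} is an eigenvector for the generator; one must observe that an eigenvector for a generator of a commutative algebra is automatically an eigenvector for the whole algebra, so the single-generator computation in Lemma \ref{ss} does in fact produce a $Fil$ stable under all of $S_{ss}$. I would make this last sentence explicit, since it is the one place where ``one endomorphism lifts'' silently upgrades to ``all endomorphisms lift.'' Everything else is a bookkeeping assembly of the slope components via Serre–Tate.
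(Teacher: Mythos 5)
Your proposal is correct and follows essentially the same route as the paper: reduce via Serre--Tate to lifting endomorphisms of the $p$-divisible group, use the decomposition $S = S_{et}\oplus S_{tor}\oplus S_{ss}$ from Proposition \ref{max}, note that $S_{ss}$ lifts by the construction in Lemma \ref{ss}, and that $S_{et}\oplus S_{tor}$ lifts because $\tilde{\G}_{et}\times\tilde{\G}_{tor}$ is the Serre--Tate canonical lift of the ordinary part. Your extra remark that an eigenvector for a generator of the commutative rank-two algebra is automatically an eigenvector for the whole algebra is a worthwhile clarification of a point the paper's proof leaves implicit, but it does not change the argument.
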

\begin{proof}
It suffices to show that all the $\F_q$-endomorphisms of $\G$ lift to $\G_{can}$. Recall that $\End_{\F_q}(\G) = S_{et} \oplus S_{tor} \oplus S_s$. By construction, the action of $S_s$ lifts to $\G_{can}$. It suffices to show that the actions of $S_{et}$ and $S_{tor}$ also lift. This follows, because $\tilde{\G}_{et} \times \tilde{\G}_{tor}$ is the canonical lift of $\G_{et} \times \G_{tor}$, and the canonical lift of an ordinary $p$-divisible group is characterized by the property that every endomorphism lifts. 
\end{proof}

\section{Classification}
\textbf{For this section, fix an odd prime $p$ and $q = p^a$.} 

A remarkable application of the Serre-Tate canonical lift for ordinary abelian varieties was given by Deligne where he provides a classification of ordinary abelian varieties over finite fields in terms of a certain category of $\Z$-modules. More precisely, if $A/\mathbb{F}_q$ is an ordinary abelian variety, let $\tilde{A}/W(\mathbb{F}_q)$ denote its canonical lift. Fixing an embedding $\iota : W(\mathbb{F}_q) \hookrightarrow \mathbb{C},$ Deligne considers the integral homology $T := H_1(\tilde{A}\otimes_\iota \mathbb{C},\mathbb{Z})$. The Frobenius endomorphism of $A$ over $\mathbb{F}_q$ lifts to an endomorphism of $\tilde{A}$ and thus defines an endomorphism $F \in \text{End}_\mathbb{Z}(T).$ Deligne then shows that the association that takes $A/\mathbb{F}_q$ to the pair $(T,F)$ gives an equivalence of categories between ordinary abelian varieties over $\mathbb{F}_q$ of dimension $g$ and the category of pairs $(T,F)$ where $T$ is a free $\mathbb{Z}$-module of rank $2g$ and $F \in \text{End}_\mathbb{Z}(T)$ satisfies the following three conditions: \begin{enumerate}\label{conditions}
	\item\label{Weil} $F\otimes \Q$ acts semisimply on $T\otimes \Q$.
	\item\label{Ver} There exists $V \in \text{End}_\mathbb{Z}(T)$ such that $F \circ V = q = V\circ F.$
	\item\label{Slope} The characteristic polynomial $h(x) \in \Z[x]$ of $F$ is a Weil $q$-polynomial (i.e. all its roots are Weil $q$-integers) such that $h(x)$ has at least $g$ roots (counting multiplicities) in $\overline{\mathbb{Q}}_p$ that are $p$-adic units.
\end{enumerate}  

Our goal in this section is to provide a similar classification for simple almost ordinary abelian varieties over $\mathbb{F}_q$ using our canonical lift(s) defined above. 

Suppose $A /\mathbb{F}_q$ is a \emph{simple} almost ordinary abelian variety of dimension $g$. Fix an embedding $\epsilon : W(\overline{\mathbb{F}_q})[\sqrt{p}] \hookrightarrow \mathbb{C}$. Let $\tilde{A}$ be one of the possibly two canonical lifts of $A$ over a ramified quadratic extension of $W(\mathbb{F}_q)$ and consider $T(A) := H_1(\tilde{A}\otimes_{\epsilon}\mathbb{C},\mathbb{Z})$, a free $\mathbb{Z}$-module of rank $2g$. The Frobenius endomorphism of $A$ over $\mathbb{F}_q$ lifts to an endomorphism of $\tilde{A}$ and thus defines an $F(A) \in \text{End}_\mathbb{Z}(T(A)).$ We associate the pair $(T(A),F(A))$ to $A$. Note that if $\mathcal{G}$ is the $p$-divisible group of $A$ and when $\End(\mathcal{G}_{\text{ss}})$ is ramified over $\Z_p,$ then $A$ has two canonical lifts over $W(\overline{\mathbb{F}}_q)[\sqrt{p}]$ and to each such canonical lift we associate a pair $(T,F).$ As in the ordinary case, \ref{Weil}. and \ref{Ver}. above are satisfied. However, we shall see that \ref{Slope} is replaced by
\begin{enumerate}
	\item[$3^*$]\label{Slope*} The characteristic polynomial $h(x) \in \mathbb{Z}[x]$ of $F$ is a Weil $q$-polynomial which is irreducible over $\Q$, and has $g-1$ roots in $\overline{\mathbb{Q}}_p$ that are $p$-adic units, $g-1$ roots with $q$-adic valuation $1$ and 2 roots with $q$-adic valuation 1/2. Thus, we have a factorisation in $\mathbb{Z}_p[x]$
	\[h(x) = h_0(x)\cdot h_1(x)\cdot h_{1/2}(x)\] where $h_i(x)$ has all its roots in $\overline{\mathbb{Q}}_p$ with $q$-adic valuation $i$. Moreover, note that $h_{1/2}(x)$ must be irreducible over $\mathbb{Q}_p$ and does not have $ \pm \sqrt{q}$ as a root. 
	
	(Indeed, if $h_{1/2}(x)$ has two distinct roots in $\mathbb{Q}_p$ then the supersingular part of the $p$-divisible group of $A$ has endomorphism algebra $\mathbb{Q}_p \times \mathbb{Q}_p$ which is not possible. If $a$ is even, $\pm \sqrt{q}$ cannot be a root since otherwise $A$ would have as an isogeny factor a supersingular elliptic curve with all its endomorphisms defined over $\mathbb{F}_q$. Similarly, if $a$ is odd and if $\pm \sqrt{q}$ is a root of $h(x)$ then this would imply that $(x^2 - q)^2$ divides $h(x)$ contradicting that $A$ is almost ordinary.
	
	And to see that $h$ is irreducible over $\Q$ we note that we're working under the assumption that $A$ is simple. Thus, $h(x)$ is a power of a $\Q$-irreducible polynomial $P$, say $h(x) = P(x)^e$. We may compute $e$ as in \cite[Pg. 527]{Waterhouse}. Factor $P(x) = \prod P_\nu (x)$ into irreducible factors in $\mathbb{Q}_p[x].$ Since $P$ has no real roots, $e$ is the least common denominator of the $i_\nu := \frac{\text{ord}_p P_\nu (0)}{a}.$ Note that $h_{1/2}$ occurs as one of the $P_\nu$, and $\frac{\text{ord}_p (h_{1/2}(0))}{a} = 1.$ The other $P_\nu,$ divide either $h_0$ or $h_1$. If $P_\nu$ divides $h_0$ then all the roots of $P_\nu$ are $p$-adic units and hence $i_\nu = 0,$ and if $P_\nu$ divides $h_1$ then all the roots of $P_\nu$ have $p$-adic valuation $a$. In all cases, $i_\nu$ is an integer and hence $e=1.$)

\end{enumerate}

Note that under the assumptions \ref{Weil}, \ref{Ver} and $3^*$ on the pair $(T(A),F(A))$ we have a decomposition 
\[ T(A) \otimes \mathbb{Z}_p = T_0 \oplus T_1 \oplus T_{1/2}\] where $T_i := \text{Ker}(h_i(F)).$ Thus, $T_{1/2}$ is a rank 1 module over $\mathbb{Z}_p[x]/h_{1/2}(x).$ In fact, it follows from Proposition \ref{max} that 
\begin{enumerate}
    \item[$4^*$] \,$T_{1/2}$ has endomorphisms by the \emph{maximal} order $\mathcal{O}_{ss}$ of $K_{ss}:=\mathbb{Q}_p[x]/\langle h_{1/2}(x)\rangle .$
\end{enumerate}

\begin{definition}\label{almostdefn}
\begin{enumerate}
    \item A pair $(T,F)$ satisfying the four conditions \ref{Weil},\ref{Ver}, 3* and 4* is said to be an almost ordinary Deligne module with Frobenius polynomial $h$.
    \item A morphism of almost ordinary Deligne modules $\phi : (T,F) \rightarrow (T',F')$ is simply a morphism $\phi : T \rightarrow T'$ of $\Z$-modules such that $\phi \circ F = F'\circ \phi$
    \item  An isogeny of almost ordinary Deligne modules is a morphism $\phi : (T,F) \rightarrow (T',F')$ such that $\phi \otimes \Q : T\otimes \Q \rightarrow T'\otimes \Q$ is an isomorphism.
    \item For a polynomial $h(x) \in \Z[x]$ satisfying 3*, we denote by $\mathcal{L}_h$ the category of almost ordinary Deligne modules with Frobenius polynomial $h$. Similarly, we define $\mathcal{C}_h$ as the category of simple almost ordinary abelian varieties over $\F_q$ with Frobenius polynomial $h$.
\end{enumerate}
\end{definition}

\begin{definition}
We say that $\mathcal{L}_h$ or $\mathcal{C}_h$ (or an object of either category) is ramified (resp. inert) when $K_{ss}$ is a ramified (resp. unramified) quadratic extension of $\Q_p.$
\end{definition}

Thus, in case that $A \in \mathcal{C}_h$ is ramified, we obtain two almost ordinary Deligne modules $\mathfrak{T}_1(A), \mathfrak{T}_2(A)$ in $\mathcal{L}_h$ using the two possible canonical lifts of $A$. 
 
\begin{proposition}\label{surjectivity}
Every almost ordinary Deligne module $(T,F) \in \mathcal{L}_h$ arises (up to isomorphism) from a simple almost ordinary abelian variety over $\F_q$ 
\end{proposition}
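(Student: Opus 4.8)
The plan is to reverse the construction that produced almost ordinary Deligne modules from abelian varieties: starting from $(T,F) \in \mathcal{L}_h$, I want to produce a CM abelian variety $\tilde A$ over a slightly ramified extension of $W(\F_q)$ whose reduction is a simple almost ordinary abelian variety $A/\F_q$ with the prescribed Frobenius polynomial $h$. The first step is purely algebraic: since $h$ is irreducible over $\Q$ by $3^*$, the ring $K := \Q[x]/(h(x))$ is a CM field of degree $2g$, and $T$ is a module over the order $\Z[F] = \Z[x]/(h(x)) \subseteq \mathcal{O}_K$. I would choose a CM type $\Phi$ on $K$ compatible with the $p$-adic slope decomposition: using the fixed embedding $\epsilon : W(\overline{\F}_q)[\sqrt p] \hookrightarrow \C$, the $2g$ complex embeddings of $K$ are partitioned into the $g-1$ coming from $h_0$, the $g-1$ from $h_1$, and the $2$ from $h_{1/2}$; put all the slope-$0$ embeddings in $\Phi$, all the slope-$1$ embeddings in $\bar\Phi$, and split the two slope-$1/2$ embeddings one into each (this is exactly the binary choice the introduction flags, and where the functors $\mathfrak{T}_1,\mathfrak{T}_2$ come from). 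Then $T \otimes_\Z \R$ with the complex structure determined by $\Phi$ gives a complex torus, and because $T$ is a lattice in $K$ stable under $\mathcal{O}_K$-multiplication at least $p$-adically in the slope-$1/2$ part (condition $4^*$), one gets an abelian variety $\tilde A_\C$ over $\C$ with CM by $\Z[F]$.

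The second step is to descend $\tilde A_\C$ to a number field and then specialize: by the theory of CM abelian varieties, $\tilde A_\C$ has a model over a number field, and after choosing a suitable prime the reduction is an abelian variety $A$ over $\F_q$ (or a finite extension, which one then controls by tracking that $F$ reduces to the $q$-power Frobenius — this is forced because $F$ satisfies $FV = q = VF$ by condition \ref{Ver} and $h$ is a Weil $q$-polynomial). The key point to verify is that $A$ is almost ordinary with the right $p$-divisible group: the slope decomposition $h = h_0 \cdot h_1 \cdot h_{1/2}$ over $\Z_p$ forces $A[p^\infty]$ to have $p$-rank $g-1$ and a one-dimensional supersingular part, so $A$ has $p$-rank $g-1$. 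Simplicity and geometric simplicity follow from $3^*$: $h$ is irreducible over $\Q$ and $h_{1/2}$ is irreducible over $\Q_p$ without $\pm\sqrt q$ as a root, which is precisely the condition ruling out a supersingular elliptic factor after base change, exactly as in the parenthetical discussion accompanying $3^*$.

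The third step — and I expect this to be the main obstacle — is to check that the $\tilde A$ produced this way is actually \emph{the canonical lift} (or one of the two canonical lifts) of $A$ in the sense of Section 2, i.e. that the supersingular part $\tilde{\G}_{ss}$ of its $p$-divisible group is one of the lifts singled out by Lemma \ref{ss}. Equivalently, I must show that the $\mathcal{O}_{ss}$-action on the slope-$1/2$ part of $A[p^\infty]$, which exists by $4^*$ and Proposition \ref{max}, lifts to $\tilde A$ — this is automatic if $\tilde A$ has CM by an order containing $\mathcal{O}_{ss}$ in its $p$-part, which the construction arranges. But I also need the lift to land in a \emph{slightly ramified} extension (degree $\leq p-1$ over $W[1/p]$), so that Grothendieck–Messing applies and Lemma \ref{ss} characterizes the lifts; here I use that $p$ is odd, so $W[\sqrt p]$ is slightly ramified, together with the fact that the reflex field of $(K,\Phi)$ at $p$ has the right ramification because the only ramified local factor is $K_{ss}$, which is at most quadratic. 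Once $\tilde A$ is identified with a canonical lift, unwinding the definition of $\mathfrak{T}_i$ shows $\mathfrak{T}_i(A) \cong (T,F)$, giving surjectivity up to isomorphism.

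Two technical points I would be careful about: first, making sure the abelian variety descends to $\F_q$ itself and not merely to some $\F_{q^n}$ — this follows by a standard argument comparing the Frobenius eigenvalues, since the characteristic polynomial of $F$ is $h$ by construction and $h$ is a Weil $q$-polynomial, so by Honda–Tate there is an abelian variety over $\F_q$ in the relevant isogeny class, and one then adjusts within the isogeny class (using that isogenies of $p$-divisible groups with one-dimensional connected supersingular part are rigid, as in the last paragraph of the proof of Proposition \ref{max}) so that the full $\Z[F]$-lattice matches $T$; second, verifying that condition $4^*$ is exactly what is needed for the $p$-divisible group of the constructed $A$ to have endomorphisms by the maximal order $\mathcal{O}_{ss}$, which is what lets Lemma \ref{ss} be invoked with $\mathcal{O} = \mathcal{O}_{ss}$.
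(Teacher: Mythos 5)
Your main route (build a complex torus from $(T,F)$ and a CM type, show it is an abelian variety with CM by $\Z[F]$, descend to a number field, and reduce at a prime above $p$) is not the paper's argument, and as written it has a genuine gap at exactly the point where the integral structure matters. Reduction of a CM abelian variety at a prime controls the resulting variety over $\F_q$ only up to isogeny --- this is essentially the content of Honda's half of Honda--Tate --- so after your second step you know only that some $A/\F_q$ exists with Frobenius polynomial $h$, i.e.\ that $(T\otimes\Q,F)\cong(T(A)\otimes\Q,\pi_A)$. The proposition, however, is an integral statement: you must realize the specific lattice $T$, not just its isogeny class. Your third step then tries to identify the characteristic-zero object with a canonical lift, but verifying that the reduction of your descended model has canonical-lift $H_1$ equal to $T$ on the nose would require precisely the lattice-level control you are trying to establish, so the argument is circular unless you switch strategies.

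Your final paragraph does switch to the correct strategy --- Honda--Tate first, then ``adjust within the isogeny class'' --- and this is what the paper actually does, but the adjustment is the entire content of the proof and you do not carry it out. Concretely: after arranging $T(A)\subseteq T$, the quotient $H=T/T(A)$ is a finite subgroup of $\tilde A\otimes_\epsilon\C$ stable under the lifted Frobenius, and one must produce a finite subgroup $\overline H\subset A$ in characteristic $p$ such that $\tilde A/H$ is again a canonical lift of $B:=A/\overline H$ with $H_1(\tilde A/H\otimes_\epsilon\C,\Z)=T$. The prime-to-$p$ part of $H$ and the slope-$0$ and slope-$1$ pieces of $H\otimes\Z_p$ reduce (resp.\ lift) uniquely to \'etale and toric subgroups of $A$; the crux is the slope-$1/2$ piece, where one uses condition $4^*$ for \emph{both} $T_{1/2}$ and $T(A)_{1/2}$ to conclude that both are rank-one modules over the maximal order $\mathcal O_{ss}$ (a DVR), hence $T(A)_{1/2}=\varpi^nT_{1/2}$ for a uniformizer $\varpi$, so $H_{1/2}$ is the kernel of $\varpi^n$ and corresponds to the unique subgroup of that order in the one-dimensional supersingular part of $A[p^\infty]$. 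You gesture at this via ``rigidity'' and the last paragraph of Proposition \ref{max}, so you have located the right tool, but without this chain of reductions the essential surjectivity is not proved. A secondary, smaller issue: for this proposition you need only that $(T,F)$ arises from \emph{some} canonical lift of \emph{some} $A$, so the care you devote to choosing between the two CM types and the two functors $\mathfrak T_1,\mathfrak T_2$ is not needed here (it matters for Propositions \ref{inertclassification} and \ref{ramifiedclassification}, not for surjectivity).
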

\begin{proof}
By Honda-Tate theory we may find a simple almost ordinary abelian variety $A$ over $\mathbb{F}_q$ such that the characteristic polynomial of the Frobenius $\pi_A$ (relative to $\mathbb{F}_q$) is $h(x).$ Thus, we see that $(T(A)\otimes \mathbb{Q},\pi_A) \cong (T\otimes \mathbb{Q},F).$

By making the above identification we have that $T \subset (T(A)\otimes \mathbb{Q}, \pi_A)$ is a $\pi_A$-stable lattice of full rank. We aim to find an almost abelian variety $B$ isogenous to $A$, such that $(T(B),\pi_B) \cong (T,\pi_A).$ For this, we may assume that $T(A) \subseteq T.$ Then $T/T(A)$ defines a finite subgroup $H \subseteq \tilde{A}\otimes_\epsilon \mathbb{C},$ stable under the lift to characteristic $0$ of the Frobenius $\pi_A$ of $A$. If the order of $H$ is coprime to $p$ then $H$ defines a subgroup scheme of $\tilde{A}.$ If $\overline{H} \subset A$ denotes the subgroup scheme obtained by reducing modulo $p$ we see that $\tilde{A}/H$ is a canonical lift of $A/\overline{H}$ and moreover, $H_1(\tilde{A}/H \otimes_\epsilon \mathbb{C}, \mathbb{Z}) = T.$  On the other hand, let $H = T/T(A)$ be a $p$-group. Then, corresponding to the decomposition $T \otimes \mathbb{Z}_p = T_0 \oplus T_1 \oplus T_{1/2}$ we have a decomposition of $H \otimes \mathbb{Z}_p = H_0 \oplus H_1 \oplus H_{1/2}.$ Moreover, $H_0$ lifts uniquely an \'etale subgroup $\overline{H}_0 \subset A$, and similarly $H_1$ lifts $\overline{H}_1$ in the toric part of the $p$-divisible group of $A.$ For the group $H_{1/2} = T_{1/2}/T(A)_{1/2},$ we note that since both $T_{1/2}$ and $T(A)_{1/2}$ admit endomorphisms by the maximal order $\mathcal{O}_{ss} \subseteq \mathbb{Q}_p[x]/h_{1/2}(x),$ we must have that $T(A)_{1/2} = \varpi^n T_{1/2}$ where $\varpi \in \mathcal{O}_{ss}$ is a uniformizer. Thus, $H_{1/2}$ lifts the supersingular part of the kernel of the isogeny given by $\varpi^n$ on $A.$ Hence, $H$ lifts a unique subgroup $\overline{H}\subset A,$ such that $\tilde{A}/H$ is a canonical lift of $B := A/\overline{H}.$ Moreover, it is clear that $(T(B),\pi_B) = (T,\pi_A).$

\end{proof}
 
 \subsection{Inert isogeny classes}\label{inertclasses}
 
Let $\alpha \in \End(A)$, where $A$ is some inert simple almost ordinary abelian variety. Let $\alpha_{ss} \in \Ok_{ss}$  denote the restriction of $\alpha$ to the local-local part of $A[p^{\infty}]$. It is easy to see that the order of $\ker(\alpha_{ss})$ equals $\Norm(\alpha_{ss})$, and as $\Ok_{ss}[1/p]$ is an unramified extension of $\Q_p$, it follows that the order has to be an \emph{even} power of $p$. Motivated by this, given an inert isogeny class $\mathcal{C}_h$, we define the following equivalence relation on its set of objects $\text{Ob}(\mathcal{C}_h)$. For $A,B \in \text{Ob}(\mathcal{C}_h)$ we say that $A \sim B$ when some (hence \emph{every}) $\F_q$-isogeny $f : A \rightarrow B$ is such that if $f[p^\infty] : A[p^\infty] \rightarrow B[p^\infty]$ denotes the induced map of $p$-divisible groups then $\ker(f[p^\infty])_{ss}$ has order $p^{r}$ for an \emph{even} integer $r$. The equivalence relation partitions the objects of $\mathcal{C}_h$ into two equivalence classes and we let $\mathcal{C}_{1,h}$ and $\mathcal{C}_{2,h}$ denote the two full subcategories of $\mathcal{C}_h$ having objects of each equivalence class. Thus $\text{Ob}(\mathcal{C}_h) = \text{Ob}(\mathcal{C}_{1,h}) \cup \text{Ob}(\mathcal{C}_{2,h})$. We claim that:
 \begin{proposition}\label{inertclassification}
 Restricted to each equivalence class $\mathcal{C}_{i,h}$ the association $A \mapsto \mathfrak{T}_i(A):=(T(A),F(A))$ is functorial for $A \in \mathcal{C}_{i,h}$ and moreover induces an equivalence of categories $$\mathfrak{T}_{i} : \mathcal{C}_{i,h} \rightarrow \mathcal{L}_h.$$
 \end{proposition}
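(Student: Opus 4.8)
The plan is to mirror Deligne's original argument for ordinary abelian varieties, adapting it to account for the non-canonicity of the lift in the inert case by working within a fixed equivalence class $\mathcal{C}_{i,h}$. First I would check that $\mathfrak{T}_i$ is well-defined on objects: for $A \in \mathcal{C}_{i,h}$ we must verify that $(T(A), F(A))$ really lies in $\mathcal{L}_h$, i.e. satisfies conditions \ref{Weil}, \ref{Ver}, $3^*$ and $4^*$. Conditions \ref{Weil} and \ref{Ver} hold because $\tilde A$ is an abelian scheme over a DVR of mixed characteristic with an action of $\pi_A$ and its dual isogeny, exactly as in the ordinary case; condition $3^*$ is the content of the discussion preceding Definition \ref{almostdefn} (using that $h$ is the Frobenius polynomial of a simple almost ordinary $A$); and $4^*$ follows from Proposition \ref{max}, since $S_{ss}$ is the maximal order and this is compatible with passing to the canonical lift by construction in Lemma \ref{ss}. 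I should also note that the choice of canonical lift used to define $\mathfrak{T}_i$ on $A$ does not matter up to isomorphism once we are inside $\mathcal{C}_{i,h}$ — actually in the inert case Lemma \ref{ss}(1) gives a \emph{unique} lift, so $\mathfrak{T}_i(A)$ is unambiguous; the subtlety is entirely in functoriality of morphisms.

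Next I would establish functoriality. Given $f : A \to B$ in $\mathcal{C}_{i,h}$, I need a map $\mathfrak{T}_i(f) : T(A) \to T(B)$ intertwining $F(A)$ and $F(B)$. The issue is that $f$ need not lift to a morphism of the chosen canonical lifts $\tilde A, \tilde B$ unless the canonical lift of $A$ maps to the canonical lift of $B$ under $f$; this is where the equivalence relation enters. The key point to prove is: if $A \sim B$ (so the supersingular part of $\ker f[p^\infty]$ has order an even power of $p$), then $f$ carries the canonical lift of $A$ to the canonical lift of $B$, i.e. $\tilde f : \tilde A \to \tilde B/(\text{lift of }\ker f)$ is an isomorphism onto a canonical lift. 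On the ordinary part this is automatic (canonical lift is functorial for ordinary $p$-divisible groups). On the supersingular part: $\ker(f)_{ss}$ is a subgroup of $\mathcal{G}_{ss} = A[p^\infty]_{ss}$ of order $p^r$ with $r$ even; since $\mathcal{G}_{ss}$ has a unique subgroup of each order $p^m$ (it is one-dimensional connected, as used in the proof of Proposition \ref{max}), and since $p^{r/2} \cdot (\text{a unit})$ in $\Ok_{ss}$ cuts out exactly the subgroup of order $p^r$ when $\Ok_{ss}$ is unramified, we get $\ker(f)_{ss} = \ker(p^{r/2})_{ss}$; but $p^{r/2} \in \Z_p \subset \Ok_{ss}$ already acts on (and lifts to) the canonical lift $\tilde{\mathcal G}_{ss}$, so the quotient of $\tilde{\mathcal G}_{ss}$ by the lift of this kernel is again a canonical lift (its $\Ok_{ss}$-action still lifts, and such a lift is unique by Lemma \ref{ss}). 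Hence $f$ lifts to $\tilde f$, inducing the required homology map $T(A) \to T(B)$; functoriality in $f$ (compatibility with composition and identities) is then formal since it comes from an honest morphism of abelian schemes in characteristic zero. I would also record that the same argument explains why $\mathfrak{T}_i$ is \emph{not} functorial on all of $\mathcal{C}_h$: when $r$ is odd, $\ker(f)_{ss} = \ker(\varpi^r)_{ss}$ for a uniformizer $\varpi$ of the unramified $\Ok_{ss}$ and $\varpi^r \notin \Z_p$ — the quotient is still a canonical lift (uniqueness again), but as the two elements $A, B$ lie in opposite classes $f$ takes the canonical lift of $A$ to the canonical lift of $B$ in the ``other'' category, so there is no way to glue the two functors.

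Then I would prove that $\mathfrak{T}_i$ induces an equivalence of categories. For \textbf{full faithfulness}: given $A, B \in \mathcal{C}_{i,h}$, I must show $\Hom_{\F_q}(A,B) \to \Hom_{\mathcal{L}_h}(\mathfrak{T}_i(A), \mathfrak{T}_i(B))$ is a bijection. Tensoring with $\Q$ this is the statement that both sides equal $\Hom_{K}$-modules (homomorphisms of the underlying $K$-vector spaces commuting with $\pi_A$), which follows from Tate's theorem on the isogeny side and from the fact that homology with its $F$-action recovers $T \otimes \Q$ as a $K$-module on the Deligne side; so it suffices to check the integral statement, namely that a quasi-isogeny $A \dashrightarrow B$ is a genuine morphism iff the induced map $T(A) \to T(B)$ is integral. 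Away from $p$ this is clear since $H_1(\tilde A \otimes \C, \Z_\ell) = T_\ell(A)$ is the $\ell$-adic Tate module. At $p$, one uses the equivalence between $p$-divisible groups over $\F_q$ and their canonical lifts together with Grothendieck–Messing / Dieudonné theory: a map of $p$-divisible groups over $\F_q$ lifting to the canonical lifts corresponds exactly to a map of the filtered Dieudonné modules, which on the ordinary part is Deligne's computation and on the supersingular part is the statement that $\Ok_{ss}$-linear maps $T_{1/2}(A) \to T_{1/2}(B)$ are the same as maps of the corresponding supersingular $p$-divisible groups (both being rank-one $\Ok_{ss}$-modules, this reduces to a computation with the uniformizer, compatible with the lift by Lemma \ref{ss}). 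For \textbf{essential surjectivity}: this is exactly Proposition \ref{surjectivity}, which produces from any $(T,F) \in \mathcal{L}_h$ a simple almost ordinary abelian variety $B$ with $(T(B), \pi_B) \cong (T,F)$; I only need to add the observation that by replacing $B$ with the appropriate $\varpi$-isogenous variety (changing $n$ by $1$, which toggles the parity and changes nothing about the isomorphism class of $(T,F)$ since $\varpi$ acts invertibly on $T \otimes \Q$... more precisely, by choosing the right representative) we can arrange $B \in \mathcal{C}_{i,h}$.

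I expect the \textbf{main obstacle} to be the full-faithfulness statement at $p$ — precisely, showing that integrality of a quasi-isogeny can be checked on the pair $(T,F)$ — because this requires carefully matching $\Ok_{ss}$-lattices in $K_{ss}$ with supersingular $p$-divisible subgroups and verifying that this matching is preserved by the canonical-lift construction of Lemma \ref{ss}, where the two-dimensionality and ramification behaviour of $S_{ss}$ must be used in exactly the right way; everything else is a more-or-less direct transcription of Deligne's argument, with the equivalence relation bookkeeping being the only genuinely new ingredient, and that part I have sketched above.
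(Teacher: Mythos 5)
Your handling of well-definedness and functoriality follows the paper's route, and your justification of the lifting step is a correct expansion of what the paper leaves implicit: for $r$ even the unique order-$p^r$ subgroup of $\G_{ss}$ is $\ker(p^{r/2})$ (a height-$2$ group, so $\ker(p^{m})$ has order $p^{2m}$), $p^{r/2}$ trivially lifts, and the quotient of the canonical lift is again a canonical lift by the uniqueness in Lemma \ref{ss}. But the same order count undermines two of your side remarks. In essential surjectivity, an $\Ok_{ss}$-isogeny in the inert case has supersingular kernel of order $\Norm(\varpi^k)=p^{2k}$, so ``changing $n$ by $1$'' never toggles the parity class; likewise, for $r$ odd the order-$p^r$ subgroup is \emph{not} $\ker(\varpi^r)_{ss}$, and is not the kernel of any element of $\Ok_{ss}$. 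The correct repair for surjectivity is the paper's: both classes are nonempty (quotient by the order-$p$ supersingular subgroup to switch classes), so one runs the construction of Proposition \ref{surjectivity} starting from an $A$ already in $\mathcal{C}_{i,h}$; the resulting $B=A/\overline{H}$ automatically stays in $\mathcal{C}_{i,h}$ because $H_{1/2}=T_{1/2}/\varpi^nT_{1/2}$ has square order.

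The substantive gap is full faithfulness. Your plan — check integrality of a quasi-isogeny prime by prime, with a Dieudonn\'e/Grothendieck--Messing matching at $p$ — is precisely the step you flag as the main obstacle and do not carry out, so as written the proof is incomplete at its hardest point. The paper sidesteps the local analysis entirely using simplicity of $A$: injectivity is clear; for surjectivity it suffices to show $n\cdot g$ is in the image for some integer $n$ (if $\mathfrak{T}_i(u)$ is divisible by $n$ then $\tilde u$ is divisible by $n$ on the generic fibre, and flatness of $\ker(n)$ over $W$ descends this to $\tilde u$ and hence to $u$). Now fix any isogeny $\phi:A\to B$; then $\mathfrak{T}_i(\phi)^{-1}\circ g$ commutes with $F$ on $T(A)\otimes\Q$, which by condition $3^*$ is a one-dimensional $K$-vector space, hence equals multiplication by some $\lambda\in K$; after a further integer scaling $\lambda\in\End_{\F_q}(A)$ and $g=\mathfrak{T}_i(\phi)\circ\mathfrak{T}_i(\lambda)=\mathfrak{T}_i(\phi\circ\lambda)$. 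You should either adopt this argument or actually supply the $\Ok_{ss}$-lattice computation at $p$ that your sketch defers.
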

 
 \begin{proof}
 Note that if $f :A \rightarrow B$ is an $\F_q$-isogeny for $A,B \in \mathcal{C}_{i,h}$, then since $\ker(f[p^\infty])_{ss}$ is of order $p^{2m}$ for $m \in \Z,$ $f[p^\infty]$ lifts uniquely to a morphism of the lifts of the $p$-divisible groups $\widetilde{f[p^\infty]} : \widetilde{A}[p^\infty]\rightarrow \widetilde{B}[p^\infty]$ and hence to a morphism between the canonical lifts $\widetilde{f} : \widetilde{A}\rightarrow \widetilde{B}.$ Thus, we obtain a morphism of integral homologies $\mathfrak{T}_i(A)\rightarrow \mathfrak{T}_i(B) .$ It follows easily that $\mathfrak{T}_i$ is a functor.
 
 The fact that each $\mathfrak{T}_i$ is essentially surjective follows from the proof of Proposition \ref{surjectivity}. We simply note that in the proof we may as well have started with an $A \in \mathcal{C}_{i,h}$ such that $(T(A)\otimes \Q,\pi_A) \cong (T\otimes \Q,F).$ Then it suffices to see that the $B = A/\overline{H}$ obtained at the end of the earlier proof belongs to the same equivalence class as $A$, since the order of $\overline{H}[p^\infty]_{ss}$ is indeed a square when $p$ is inert in $\mathcal{O}_{ss}$.    

It remains to show that $\mathfrak{T}_i$ is fully-faithful, i.e. for $A,B \in \mathcal{C}_{i,h}$ we show that the natural map $\Hom_{\F_q}(A,B) \rightarrow \Hom_{\mathcal{L}_h}(\mathfrak{T}_i(A),\mathfrak{T}_i(B))$ is a bijection. The injectivity of this map is clear. Indeed, different isogenies between $A$ and $B$ lift to different isogenies between $\tilde{A}$ and $\tilde{B}$, and different maps between $\tilde{A}$ and $\tilde{B}$ induce different maps between $T(B)$ and $T(C)$.

To show that an isogeny $g : T(A) \rightarrow T(B)$ is in the image, it suffices to show that $n\cdot g$ is in the image, for some integer $n$. Indeed, if $u : A \rightarrow B$ is an $\F_q$-isogeny such that $\mathfrak{T}_i(u)$ is divisible by $n$, then this means that $\tilde{u}\otimes_\epsilon \C$ is divisible by $n$, and hence $\tilde{u}$ is divisible by $n$ over the generic point of $W(\F_q).$ Since the kernel of $n$ is flat over $W$ we get that $\tilde{u}$ and hence $u$ is divisible by $n$. 

Let $\phi :A \rightarrow B$ be an $\F_q$-isogeny. Note that $\mathfrak{T}_i(\phi)$ and $g$ being isogenies give rise to isomorphisms $\mathfrak{T}_i(\phi) : T(A)\otimes \Q \xrightarrow{\cong} T(B)\otimes \Q$ and $g :T(A)\otimes \Q \xrightarrow{\cong} T(B) \otimes \Q$. Since we're free to replace $g$ with $n\cdot g$, we may assume that $\mathfrak{T}_i(\phi)^{-1}\circ g \left(T(A)\right) \subseteq T(A).$ However, we recall that as $A$ is assumed to be simple, $T(A)\otimes \Q$ is a dimension 1 vector space over the field $K := \End^0(A),$ and thus $\mathfrak{T}_i(\phi)^{-1}\circ g$ is an element of $\lambda \in K.$ By scaling $g$ further by an integer $n$ we may even assume $\lambda \in \End_{\F_q}(A).$ Thus, $g = \mathfrak{T}_i(\phi)\circ \mathfrak{T}_i(\lambda) = \mathfrak{T}_i(\phi \circ \lambda)$ as desired. 
 \end{proof}
 Proposition \ref{inertclassification} implies that there is a 2-1 map from $\mathcal{C}_h$ to $\mathcal{L}_h$. In particular, given any Deligne module, there exist two almost ordinary abelian varieties corresponding to it, and also two complex Abelian varieties with the same $H_1$ corresponding to the Deligne module. This non-uniqueness is explained by the fact that there are two different CM types on the algebra of endomorphisms associated to $\mathcal{C}_h$, and choosing one of these two different CM types is the same as choosing one of the two different equivalence classes in $\mathcal{C}_h$. 
 \subsection{Ramified isogeny classes}\label{ramifiedclasses}
In the case of ramified isogeny classes, there is no canonical way to associate to $A$ a module $T$, because of the existence of two canonical lifts. However, as we will show, fixing a choice of canonical lift of some one almost ordinary abelian variety fixes a choice of canonical lift for every other abelian variety in the same isogeny class. Indeed, choosing one canonical lift over the other is equivalent to choosing one amongst the two possible CM types on the endomorphism algebra of the ramified isogeny class. 
To that end, fix a ramified $\F_q$-isogeny class $\mathcal{C}_h$, along with an abelian variety $A \in \mathcal{C}_h$ defined over $\F_q$. We define $\tilde{A}$ to be one of the two canonical lifts of $A$. For the rest of this section, we will call $\tilde{A}$ \emph{the} canonical lift of $A$. 

\begin{proposition}\label{liftsubgroup}
Let $G \subset A$ denote any finite flat subgroup defined over $\F_q$. There is then a canonical subgroup $\tilde{G} \subset \tilde{A}$ lifting $G$. 
\end{proposition}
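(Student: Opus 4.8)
The plan is to decompose $G$ according to the slope decomposition of the $p$-divisible group of $A$ together with the prime-to-$p$ part, and lift each piece separately. Write $G = G^{(p')} \times G[p^\infty]$, where $G^{(p')}$ is the part of order prime to $p$ and $G[p^\infty] = G_{et} \times G_{tor} \times G_{ss}$ is the further decomposition of the $p$-part into étale, toric, and local-local components (using that $\F_q$ is perfect, as in Proposition \ref{max}). It suffices to produce a canonical lift of each factor to $\tilde A$, since the lift of $G$ will then be the product; and the product is visibly independent of any choices once each factor's lift is canonical.

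For the prime-to-$p$ part $G^{(p')}$: the reduction map from finite flat group schemes of order prime to $p$ over $W(\F_q)[\sqrt p]$ to those over $\F_q$ is an equivalence (such group schemes are étale, and étale group schemes lift uniquely along the Henselian pair $(\Ok, \mathfrak{m})$). So $G^{(p')}$ lifts uniquely to $\tilde A$. For the étale and toric parts $G_{et}$, $G_{tor}$: by construction $\tilde A$ is a canonical lift, so $\tilde G_{et}$ lifts uniquely inside $\tilde{\G}_{et}$ (étale subgroups of a fixed $p$-divisible group over $\Ok$ biject with those over $\F_q$), and dually $\tilde G_{tor}$ lifts uniquely inside $\tilde{\G}_{tor}$ (by Cartier duality, reducing to the étale case). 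For the supersingular part $G_{ss}$: here we use that $\G_{ss}$ is connected one-dimensional, so by the argument at the end of the proof of Proposition \ref{max} it has a unique finite flat subgroup of each order $p^m$; hence $G_{ss}$ is one of these, say the unique subgroup $\G_{ss}[\varpi^n]$ where $\varpi$ is a uniformizer of $\Ok_{ss}$. The lift $\tilde{\G}_s$ of $\G_s$ constructed in Lemma \ref{ss} carries an action of $\Ok_{ss}$, so $\tilde{\G}_s[\varpi^n]$ is a finite flat subgroup of $\tilde A$ reducing to $G_{ss}$; this is the canonical choice.

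Finally one should check that these lifts assemble into an honest subgroup of $\tilde A$ and that "canonical" means what it should, namely the construction is compatible with the $\F_q$-rational structure: since every factor was produced by a construction internal to the already-fixed canonical lift $\tilde A$ (whose $p$-divisible group is $\tilde{\G}_s \times \tilde{\G}_{et} \times \tilde{\G}_{tor}$), and since $\tilde A$ carries all endomorphisms of $A$, the resulting $\tilde G$ is stable under exactly the endomorphisms of $\tilde A$ under which $G$ was stable, and in particular is defined over $\F_q$ in the sense that it is a closed subscheme of $\tilde A$ whose reduction is $G$.

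The main obstacle I expect is the supersingular part. Everything else (prime-to-$p$, étale, toric) is a formal consequence of deformation theory of étale group schemes and Cartier duality, but the local-local factor requires knowing that $G_{ss}$ is forced to be a "$\varpi$-power torsion" subgroup — i.e. that the $\Ok_{ss}$-module structure on the Dieudonné module pins down $G_{ss}$ uniquely among subgroups of its order — and then that Lemma \ref{ss}'s lift, being $\Ok_{ss}$-equivariant, contains a matching subgroup. In the ramified case one must be slightly careful that although there were two choices of $\tilde{\G}_s$, we have already fixed one of them as "the" canonical lift, so no further ambiguity enters; the subgroup $\tilde{\G}_s[\varpi^n]$ is canonical relative to that fixed choice.
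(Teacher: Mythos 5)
Your proposal is correct and follows essentially the same route as the paper: reduce to the prime-to-$p$, \'etale, toric, and local-local factors, handle the first three by the defining property of the canonical lift, and for the local-local factor use that $\G_s$ has a unique subgroup of each order, which (in the ramified setting of this subsection) is the $\varpi^n$-torsion and therefore lifts via the $\Ok_{ss}$-action carried by $\tilde{\G}_s$. The paper's proof is just a terser version of the same argument.
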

\begin{proof}
If $G$ is of the form $G_1 \times G_2$, it suffices to prove this result for both $G_1$ and $G_2$. Further, this result is tautologically true for prime-to-$p$ subgroups of $A$. Therefore, we may assume that $G$ has order a power of $p$. Further, we may assume that $G$ is either \'etale, or multiplicative, or local-local. By the definition of the canonical lift, \'etale and multiplicative subgroups lift uniquely so it suffices to prove that every local-local finite flat subgroup of $A$ lifts uniquely to $\tilde{A}$. 

Therefore, let $G \subset A$ be a finite flat subgroup which is local-local. Further, let $A[p^{\infty}] = \G_{et} \times \G_{tor} \times \G_s$. Then, $G \subset \G_s$. Further, we defined the canonical lift of $A$ to correspond to the product of the canonical lifts $\tilde{\G_?}$ of $\G_?$ where $?$ stands for either $et, tor$ or $s$. Therefore, it suffices to prove that $G \subset \G_s$ lifts to a subgroup $\tilde{G} \subset \tilde{\G}_s$. 

It is easy to see that $\G_s$ has a unique order $p^n$ subgroup for each $n$. In fact, as we are dealing with the ramified case, this subgroup equals the $\varpi^n$-torsion of $\G_s$, where $\varpi$ is the uniformizing parameter for $\End_{\F_q}(\G_s)$. Therefore, we may assume that $G = \G_s[\varpi^n]$. As our canonical lift $\tilde{\G}_s$ has the property that every endomorphism of $\G_s$ lifts of $\tilde{\G}_s$, it follows that the action of $\Z_p[\varpi]$ also lifts to $\tilde{\G}_s$. It now follows that the $\varpi^n$ torsion of $\tilde{\G}_s$ is the required lift of $G$. 

\end{proof}

\begin{definition}
\begin{enumerate}
\item Let $G\subset A$ be some finite flat subgroup. We define $\tilde{G} \subset \tilde{A}$ to be the (canonical) lift of $G$ defined in Proposition \ref{liftsubgroup}. 

\item Let $B$ be an abelian variety isogenous to $A$, with $\phi: A \rightarrow B$ an isogeny with kernel $G$. Define $\tilde{B}$ to be the lift of $B$ given by $\tilde{A} / \tilde{G}$.
\end{enumerate}
\end{definition}

\begin{proposition}\label{liftwelldefined}
The lift $\tilde{B}$ is a canonical lift of $B$, and doesn't depend on the choice of the isogeny $\phi$. 
\end{proposition}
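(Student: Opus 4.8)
The plan is to show two things: first, that $\tilde B = \tilde A/\tilde G$ really is one of the two canonical lifts of $B$ in the sense of the definition preceding Lemma~\ref{ss} (i.e. that it arises from the product of the canonical lift(s) of the $p$-divisible group of $B$), and second, that it is independent of the choice of $\phi$. For the first point, I would argue on the level of $p$-divisible groups. Writing $A[p^\infty] = \G_{et}\times\G_{tor}\times\G_s$ and $G = G_{et}\times G_{tor}\times G_s$, Proposition~\ref{liftsubgroup} gives $\tilde G = \tilde G_{et}\times\tilde G_{tor}\times\tilde G_s$ inside $\tilde A[p^\infty] = \tilde\G_{et}\times\tilde\G_{tor}\times\tilde\G_s$, so $\tilde A[p^\infty]/\tilde G$ is the product of the three quotients. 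For the \'etale and multiplicative parts this is immediate, since the canonical lift of an ordinary $p$-divisible group is stable under quotients by lifted subgroups (this is exactly the defining property of the Serre--Tate canonical lift). For the supersingular part, recall from the proof of Proposition~\ref{liftsubgroup} that $G_s = \G_s[\varpi^n]$ for the uniformizer $\varpi$ of $\mathcal O_{ss} = \End_{\F_q}(\G_s)$, so $\tilde\G_s/\tilde G_s = \tilde\G_s/\tilde\G_s[\varpi^n]$; since $\varpi^n$ is an isogeny of $\tilde\G_s$ whose action descends from $\mathcal O_{ss}\subset\End(\G_s)$, this quotient is isomorphic to $\tilde\G_s$ itself, and in particular still carries an action of $\mathcal O_{ss}$ lifting that on $\G_s/G_s\cong\G_s$. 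Hence $\tilde\G_s/\tilde G_s$ is (one of the two) canonical lift(s) of the supersingular part of $B[p^\infty]$ characterized by Lemma~\ref{ss}, and therefore $\tilde B$ is a canonical lift of $B$.

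For the independence from $\phi$, suppose $\phi : A\to B$ and $\phi' : A\to B$ are two $\F_q$-isogenies with kernels $G$ and $G'$; after composing $\phi'$ with the isomorphism $B\xrightarrow{\sim} B$ coming from $\phi$ (more precisely, after replacing one isogeny by a suitable multiple, using that $\Hom_{\F_q}(A,B)\otimes\Q$ is one-dimensional over the CM field $K$), it suffices to treat the case where $\phi$ and $\phi'$ differ by an automorphism, equivalently where $G = G'$ as subgroup schemes of $A$. But then $\tilde G = \tilde G'$ by the canonicity in Proposition~\ref{liftsubgroup}, and so $\tilde A/\tilde G = \tilde A/\tilde G'$. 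More carefully, the clean way to phrase this: given any isogeny $\psi : B \to C$ between members of $\mathcal C_h$, one has $\widetilde{\psi\circ\phi} = \tilde\psi\circ\tilde\phi$ at the level of lifts (both sides are $\tilde A/\widetilde{\ker(\psi\phi)}$, using that the formation $G\mapsto\tilde G$ is compatible with the inclusions $\ker\phi\subset\ker(\psi\phi)$, which again reduces to the supersingular part where everything is governed by powers of $\varpi$); applying this with $\psi$ an isomorphism identifying the two targets shows the two candidate lifts of $B$ agree.

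The main obstacle I anticipate is the bookkeeping on the supersingular part: one must verify that $\tilde G \mapsto \tilde A/\tilde G$ is genuinely functorial in the subgroup, i.e. that for $G\subset G'\subset\G_s$ (both local-local) the lift $\widetilde{G'/G}\subset\tilde\G_s/\tilde G$ agrees with $\tilde G'/\tilde G$, and that the resulting quotient $p$-divisible group still has $\mathcal O_{ss}$ acting as its full endomorphism ring (so that it is the canonical lift and not merely \emph{a} lift admitting an $\mathcal O_{ss}$-action). This is exactly where the hypothesis that $\mathcal O_{ss}$ is the maximal order (Proposition~\ref{max}) and that $\G_s$ is one-dimensional — hence has a totally ordered lattice of finite subgroups, all of the form $\G_s[\varpi^n]$ — does the work: every local-local subgroup is $\varpi$-power torsion, quotients by such are again $\varpi$-power twists of $\tilde\G_s$, and the tower of these is compatible. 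Once this is in hand, the gluing with the ordinary (\'etale $\times$ multiplicative) part is routine from the theory of Serre--Tate canonical lifts, and the independence statement is a formal consequence.
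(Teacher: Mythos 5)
Your proposal is essentially the paper's own argument: canonicity is checked on $p$-divisible groups using the $\varpi$-power structure of the supersingular part, and independence of $\phi$ is reduced, via integer scaling and the one-dimensionality of $\Hom(A,B)\otimes\Q$ over $K$, to the fact that the connecting map lifts. One correction: after scaling, two isogenies $\phi_2, \phi_1 : A \to B$ differ by an \emph{endomorphism} $\alpha \in \End(B)$ with $\phi_2 = \alpha\circ\phi_1$, not by an automorphism, so it is never the case that $G = G'$; the conclusion instead follows (as in the paper) because $\ker\phi_1 \subseteq \ker\phi_2$, your compatibility of $G \mapsto \tilde G$ with inclusions applies, and $\alpha$ lifts to $\tilde B_1$ since $\tilde B_1$ is a canonical lift, identifying $\tilde A/\tilde G_2$ with $\tilde B_1$ over the identification $\alpha : B_1/\ker\alpha \cong B$.
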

\begin{proof}
That the lift is a canonical lift can be checked on the level of $p$-divisible groups. By construction, the $p$-divisible group $\tilde{B}[p^{\infty}]$ is a product of the lifts of the \'etale, multiplicative, and local-local parts of the $p$-divisible group $B[p^{\infty}]$. Further, the action of $\Z_p[\varpi]$ preserves $\tilde{G}$ by construction, and so continues to act on $\tilde{B}[p^{\infty}]$. This proves that $\tilde{B}$ is a canonical lift of $B$. 

In order to prove that this lift is independent of $\phi$ and $G$, suppose that $\phi_1, \phi_2$ are two isogenies between $A$ and $B$, whose kernels are $G_1$ and $G_2$. Define $\tilde{B}_i$ to be the lifts of $B$ which equal $\tilde{A} / \tilde{G_i}$ for $i = 1,2$. By replacing $\phi_2$ with an integer scalar multiple, we may assume that $\phi_2$ factors through $\phi_1$. Therefore, there exists an endomorphism $\alpha \in \End(B)$ such that $\phi_2 = \alpha \circ \phi_1$. But now, the proposition follows from the fact that every endomorphism of $B$ lifts to an endomorphism of $\tilde{B_1}$. 

\end{proof}

\begin{proposition}\label{functoriality}
Every $\F_q$-isogeny $\phi: B \rightarrow C$ lifts uniquely to an isogeny $\tilde{\phi} : \tilde{B} \rightarrow \tilde{C}$ where $\tilde{B},\tilde{C}$ are the lifts provided by Proposition \ref{liftwelldefined}. Thus, the association of $B \mapsto \tilde{B}$ defines a functor from the $\F_q$-isogeny class $\mathcal{C}_h$ of $A$ to the collection of lifts.
\end{proposition}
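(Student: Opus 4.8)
The plan is to build $\tilde\phi$ by hand as a quotient morphism between two quotients of $\tilde A$, and then to deduce uniqueness and functoriality formally. Since $h$ is irreducible, $\mathcal{C}_h$ is a single $\F_q$-isogeny class, so I may fix an $\F_q$-isogeny $\psi\colon A\to B$ and set $\rho:=\phi\circ\psi\colon A\to C$, again an $\F_q$-isogeny. Write $G:=\ker\psi$ and $G':=\ker\rho$; these are finite flat subgroup schemes of $A$ over $\F_q$ with $G\subseteq G'$. Applying Proposition~\ref{liftwelldefined} to $\psi$ and to $\rho$ separately, I may identify $\tilde B=\tilde A/\tilde G$ and $\tilde C=\tilde A/\tilde{G'}$, where $\tilde G,\tilde{G'}$ are the canonical lifts of Proposition~\ref{liftsubgroup}; by the very definition of these lifts, the identifications reduce modulo $p$ to the identifications $B=A/G$, $C=A/G'$ coming from $\psi$ and $\rho$.

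The key point to establish is monotonicity of the subgroup lift: $G\subseteq G'$ implies $\tilde G\subseteq\tilde{G'}$. I would check this after decomposing $G,G'$ into their prime-to-$p$ parts and the \'etale, multiplicative and local-local pieces of their $p$-primary parts, exactly as in the proof of Proposition~\ref{liftsubgroup}. On the prime-to-$p$, \'etale, and multiplicative pieces the lift is the unique lift of a finite \'etale (resp.\ multiplicative) group scheme and is therefore functorial in the group, so inclusions are preserved. On the local-local piece, $\G_s$ has a unique subgroup of each order, so $G_s=\G_s[\varpi^m]$ and $G'_s=\G_s[\varpi^n]$ with $\varpi$ a uniformizer of $\Ok_{ss}$; here $|\G_s[\varpi^k]|=p^k$ in the ramified case, so $G_s\subseteq G'_s$ forces $m\le n$, and hence $\tilde G_s=\tilde{\G}_s[\varpi^m]\subseteq\tilde{\G}_s[\varpi^n]=\tilde{G'}_s$ since the action of $\Z_p[\varpi]$ lifts to $\tilde{\G}_s$. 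Granting this, the natural projection $\tilde\phi\colon\tilde A/\tilde G\to\tilde A/\tilde{G'}$ has finite flat kernel $\tilde{G'}/\tilde G$, hence is an isogeny $\tilde B\to\tilde C$, and its reduction modulo $p$ is the projection $A/G\to A/G'$; under the identifications above this projection is precisely $\phi$ (because $\phi\circ\psi=\rho$ and $\psi$ is an epimorphism). Thus $\tilde\phi$ lifts $\phi$.

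For uniqueness I would invoke the standard fact that, over the complete discrete valuation ring over which the canonical lift lives, reduction to the special fibre is injective on homomorphisms of abelian schemes — e.g.\ because such a homomorphism is determined by its action on prime-to-$p$ torsion, which is \'etale and so detected on the special fibre. Hence any homomorphism $\tilde B\to\tilde C$ reducing to $\phi$ agrees with $\tilde\phi$. Functoriality of $B\mapsto\tilde B$ is then immediate: uniqueness forces $\widetilde{\mathrm{id}_B}=\mathrm{id}_{\tilde B}$ and $\widetilde{\phi_2\circ\phi_1}=\tilde\phi_2\circ\tilde\phi_1$ for composable isogenies, and since every object of $\mathcal{C}_h$ is simple, every nonzero morphism in $\mathcal{C}_h$ is an isogeny (the zero morphism lifting to zero), so this defines a functor on all of $\mathcal{C}_h$.

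I expect the genuine obstacle to be the monotonicity/functoriality of the subgroup lift: Proposition~\ref{liftsubgroup} produces $\tilde G$ one subgroup at a time, so one must make sure the assignment $G\mapsto\tilde G$ really respects inclusions, and — relatedly — that the two identifications $\tilde B\cong\tilde A/\tilde G$ and $\tilde C\cong\tilde A/\tilde{G'}$ furnished by Proposition~\ref{liftwelldefined} are compatible enough that the induced $\tilde\phi$ reduces to $\phi$ itself rather than to some automorphism-twist of it. The remaining ingredients are either formal or standard deformation-theoretic facts.
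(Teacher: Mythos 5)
Your proof is correct and follows essentially the same route as the paper: fix $\psi\colon A\to B$, set $G=\ker\psi$ and $G'=\ker(\phi\circ\psi)$, show $\tilde G\subseteq\tilde{G'}$, and take $\tilde\phi$ to be the natural projection $\tilde A/\tilde G\to\tilde A/\tilde{G'}$. The paper's proof simply asserts the inclusion of lifted subgroups and the functoriality as "clear," whereas you supply the piecewise verification and the uniqueness argument via injectivity of reduction; these are welcome elaborations, not deviations.
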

\begin{proof}
Let $\psi : A \rightarrow B$ be an $\F_q$-isogeny. Let $G := \ker(\psi)$ and $H := \ker(\phi \circ \psi) \supseteq G.$ Clearly, $\tilde{G} \subseteq \tilde{H}$ and moreover the lift $\tilde{\phi}$ corresponds to the natural isogeny $\tilde{B} = \tilde{A}/\tilde{G} \rightarrow \tilde{A}/\tilde{H} = \tilde{C}.$ The fact that this is functorial is also clear.
\end{proof}

Therefore, given an $\F_q$-isogeny $\phi : B \rightarrow C$ (where $B, C$ are abelian varieties in the isogeny class $\mathcal{C}_h$ of $A$) we get a map $\mathfrak{T}(\phi): (T(B),\pi_B) \rightarrow (T(C),\pi_C)$. (Here $T(B)$ refers to $H_1(\tilde{B}\otimes_\epsilon \C,\Z)$ for $\tilde{B}$ being the particular lift defined above.) This defines a functor $\mathfrak{T} : \mathcal{C}_h \rightarrow \mathcal{L}_h$. 


\begin{proposition}\label{ramifiedclassification}
The functor 
$$\mathfrak{T} : \mathcal{C}_h \rightarrow \mathcal{L}_h$$ is an equivalence of categories.
\end{proposition}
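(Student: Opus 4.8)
The plan is to follow the same three-step strategy as in the proof of Proposition~\ref{inertclassification} --- essential surjectivity, faithfulness, and fullness of $\mathfrak{T}$ --- noting that the ramified case is in fact cleaner, because $\G_s$ has a unique subgroup of order $p^n$ for every $n$ (namely its $\varpi^n$-torsion), so every $\F_q$-isogeny lifts to the fixed canonical lifts via Propositions~\ref{liftsubgroup}, \ref{liftwelldefined} and~\ref{functoriality}, and there is no need to partition $\mathcal{C}_h$ into subcategories.

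For essential surjectivity, let $(T,F) \in \mathcal{L}_h$. By Proposition~\ref{surjectivity} there is a simple almost ordinary abelian variety $B/\F_q$ with $(T(B),\pi_B) \cong (T,F)$; moreover the construction there realizes $B$ as $A/\overline{H}$ for our fixed $A$ and a suitable finite flat subgroup $\overline{H} \subset A$, and exhibits $\widetilde{A}/\widetilde{H}$ as a canonical lift of $B$. Since we have declared $\widetilde{B} := \widetilde{A}/\widetilde{H}$ to be \emph{the} canonical lift of $B$ (Proposition~\ref{liftwelldefined}), the module $H_1(\widetilde{B}\otimes_\epsilon \C,\Z)$ computed with this particular lift is exactly $T$, so $\mathfrak{T}(B) \cong (T,F)$. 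For faithfulness, let $B,C \in \mathcal{C}_h$: distinct $\F_q$-morphisms $B \to C$ lift to distinct morphisms $\widetilde{B}\to\widetilde{C}$ by the uniqueness in Proposition~\ref{functoriality} (two lifts that agree must already agree on the special fiber), and distinct morphisms $\widetilde{B}\to\widetilde{C}$ induce distinct maps on integral homology, so $\Hom_{\F_q}(B,C) \to \Hom_{\mathcal{L}_h}(\mathfrak{T}(B),\mathfrak{T}(C))$ is injective.

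For fullness, let $g\colon (T(B),\pi_B) \to (T(C),\pi_C)$ be a morphism of almost ordinary Deligne modules. It suffices to show that $n\cdot g$ lies in the image for some positive integer $n$: if $\mathfrak{T}(u) = n\cdot g'$ for an $\F_q$-morphism $u\colon B \to C$, then $\widetilde{u}\otimes_\epsilon\C$ is divisible by $n$, hence so is $\widetilde{u}$ over the generic fiber of $W(\F_q)[\sqrt{p}]$, and since $\ker(n)$ is flat over the base, $\widetilde{u}$ and therefore $u$ is divisible by $n$. Now fix any $\F_q$-isogeny $\phi\colon B \to C$ (these exist since $B$ and $C$ lie in the same isogeny class); it induces an isomorphism $\mathfrak{T}(\phi)\colon T(B)\otimes\Q \xrightarrow{\cong} T(C)\otimes\Q$, and after replacing $g$ by an integer multiple we may assume that $\mathfrak{T}(\phi)^{-1}\circ g$ carries $T(B)$ into itself. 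Since $A$, hence $B$, is simple, $T(B)\otimes\Q$ is one-dimensional over $K := \End^0_{\F_q}(B)$, so $\mathfrak{T}(\phi)^{-1}\circ g$ is multiplication by some $\lambda \in K$; rescaling $g$ by a further integer we may take $\lambda \in \End_{\F_q}(B)$, whence $g = \mathfrak{T}(\phi)\circ\mathfrak{T}(\lambda) = \mathfrak{T}(\phi\circ\lambda)$ is in the image.

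The main point requiring care is the fullness argument, and within it the passage from divisibility of the morphism after base change to $\C$ to divisibility of the actual morphism of lifts --- this is exactly where flatness of $\ker(n)$ over the slightly ramified base $W(\F_q)[\sqrt{p}]$ is invoked --- together with the identification of $\mathfrak{T}(\phi)^{-1}\circ g$ with an element of the endomorphism algebra using simplicity of $A$. Everything else is a direct transcription of the inert argument, now unconditional because of the unique lifting of $\varpi^n$-torsion established in Proposition~\ref{liftsubgroup}.
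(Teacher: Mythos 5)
Your proposal is correct and takes essentially the same route as the paper, which simply cites Proposition \ref{surjectivity} for essential surjectivity and declares full faithfulness to follow by an argument ``almost identical'' to the inert case; you have merely written out that transcription in detail, including the one point genuinely worth checking, namely that the abelian variety produced by Proposition \ref{surjectivity} carries the distinguished canonical lift fixed in Section \ref{ramifiedclasses}.
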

\begin{proof}
By Proposition \ref{surjectivity}, this functor is essentially surjective. That $\mathfrak{T}$ is fully-faithful follows from an argument almost identical to that of Proposition \ref{inertclassification}. 

\end{proof}




 

This completes the proof of the classification Theorem \ref{class} stated in the Introduction.

\section{Polarizations}
Throughout this section, we fix an isogeny class $\mathcal{C}_h$ of almost ordinary abelian varieties over $\F_q$ with Frobenius polynomial $h.$ In the case that $\mathcal{C}_h$ is ramified we also fix at once a compatible choice of canonical lifts for all the varieties in $\mathcal{C}_h$ as was done in Section \ref{ramifiedclasses}. Henceforth, we will refer to this choice of canonical lift as \emph{the} canonical lift for any $A \in \mathcal{C}_h$.
We note that an $\F_q$-isogeny $\phi: A\rightarrow B$ induces an isomorphism of fields $\End^0(A)\cong \End^0(B).$ After identifying these fields we denote the common endomorphism algebra by $K.$ Finally, as we have fixed an embedding of $W(\overline{\F}_p)[\sqrt{p}]$ in $\C$, every $A \in \mathcal{C}_h$ gives rise to a complex abelian variety with CM by the field $K$, and in the ramified case the compatible choice of lifts amounts to a compatible CM type $\Phi$ of the common endomorphism algebra $K$.

\begin{proposition}\label{polinert}
Let $A$ be an inert almost ordinary abelian variety over $\F_q$, and let $A^\vee$ denote its dual. Then $A^\vee$ is in the same equivalence class as $A$. 
\end{proposition}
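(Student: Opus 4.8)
The plan is to show that the dual $A^\vee$ lies in the same equivalence class as $A$ by analyzing the supersingular part of the $p$-divisible group and the isogeny relating $A$ and $A^\vee$. Recall that by the definition of the equivalence relation in \S\ref{inertclasses}, $A \sim A^\vee$ precisely when some (equivalently every) $\F_q$-isogeny $f : A \to A^\vee$ has the property that $\ker(f[p^\infty])_{ss}$ has order $p^r$ with $r$ even. So it suffices to exhibit one such isogeny and compute the order of the supersingular part of its kernel, or to argue on general grounds that this order is forced to be an even power of $p$.

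First I would take a polarization $\lambda : A \to A^\vee$, which is an $\F_q$-isogeny (a polarization always exists, and works here since $A$ is simple so every isogeny to $A^\vee$ differs by an element of $K$). The key observation is that a polarization is symmetric: under the canonical identification $A \cong A^{\vee\vee}$, one has $\lambda^\vee = \lambda$. Dualizing the $p$-divisible group, the supersingular part $\G_{ss}$ of $A[p^\infty]$ is self-dual (a one-dimensional supersingular $p$-divisible group of height $2$ is isomorphic to its Serre dual), and the restriction $\lambda[p^\infty]_{ss} : \G_{ss} \to (\G_{ss})^\vee \cong \G_{ss}$ is a symmetric isogeny, hence given by multiplication by an element $\mu \in \Ok_{ss}$ that is fixed by the Rosati involution; on $K_{ss}$ the relevant involution is complex conjugation (the nontrivial automorphism of the quadratic extension $\Ok_{ss}/\Z_p$ in the inert case), so $\mu$ lies in $\Z_p$ up to a unit — more precisely $\mu = \varpi^{k}\cdot(\text{unit})$ with $\varpi = p$ a uniformizer of the unramified $\Ok_{ss}$, since a conjugation-fixed element of an unramified quadratic extension generates the same ideal as an element of $\Z_p$. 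Hence $\ker(\lambda[p^\infty])_{ss} = \Ok_{ss}/\mu\Ok_{ss}$ has order $\Norm_{\Ok_{ss}/\Z_p}(\mu) = p^{2k}$, which is an even power of $p$. An alternative, cleaner route avoiding the symmetry of $\lambda$: as already observed in \S\ref{inertclasses}, for \emph{any} $\alpha \in \End(A)$ the supersingular part $\alpha_{ss} \in \Ok_{ss}$ has $\#\ker(\alpha_{ss}) = \Norm(\alpha_{ss})$, which is automatically an even power of $p$ because $\Ok_{ss}[1/p]/\Q_p$ is unramified; one then just needs that the composite $A \xrightarrow{\lambda} A^\vee$ reduced to the $ss$-part is, up to the self-duality identification, given by such a norm.

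The main obstacle I anticipate is bookkeeping the self-duality identification $\G_{ss}^\vee \cong \G_{ss}$ correctly and checking that under it the polarization restricts to an \emph{honest} endomorphism (element of $\Ok_{ss}$) rather than merely a morphism to a twist — i.e. making precise that the induced pairing on $\Ok_{ss}$ intertwines with the Rosati involution so that $\mu$ is conjugation-fixed. Once that is set up, the valuation count is immediate from the inert (unramified) hypothesis. I would also remark that independence of the choice of isogeny is free: the equivalence relation is well-defined on objects (this is asserted in \S\ref{inertclasses}), so verifying the condition for the single isogeny $\lambda$ suffices, and in fact the argument via norms shows it for every isogeny $A \to A^\vee$ at once since any two differ by an element of $K^\times$ whose $ss$-part contributes an even power of $p$ to the valuation.
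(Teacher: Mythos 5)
There is a genuine gap, and it sits exactly at the point you flag as ``the main obstacle'': the identification $\G_{ss}^\vee \cong \G_{ss}$ \emph{over $\F_q$}. Over $\overline{\F}_p$ a one-dimensional supersingular $p$-divisible group of height $2$ is indeed isomorphic to its Serre dual, but over $\F_q$ the geometric object has two relevant forms, isogenous to each other only through isogenies whose kernels have \emph{odd} $p$-power order --- this is precisely why the two classes $\mathcal{C}_{1,h}$ and $\mathcal{C}_{2,h}$ exist at all. Since $\G_{ss}$ has a unique subgroup of each order and $\G_{ss}/\G_{ss}[p^m]\cong\G_{ss}$, the statement ``some (hence every) isogeny $\G_{ss}\to\G_{ss}^\vee$ over $\F_q$ has kernel of even order'' is \emph{equivalent} to ``$\G_{ss}^\vee\cong\G_{ss}$ over $\F_q$,'' which in turn is equivalent to the proposition itself. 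So asserting the self-duality assumes the conclusion; if $\G_{ss}^\vee$ happened to be the other form, your $\mu$ would not be an endomorphism and every isogeny $A\to A^\vee$ would have supersingular kernel of odd order. (A side remark: once the identification is granted, the Rosati-fixedness of $\mu$ is irrelevant --- \emph{every} element of the unramified $\Ok_{ss}$ has kernel of order $\Norm(\mu)$, an even power of $p$. The entire content is the identification, not the symmetry of $\lambda$. Cartier self-duality of $\ker(\lambda)_{ss}$ also does not save you: $\alpha_p$ is self-dual of order $p$.)

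The paper's proof avoids this by never identifying $\G_{ss}^\vee$ with $\G_{ss}$. After extending the base field so that the isogeny class contains a principally polarized $B$ with $\lambda: B\to B^\vee$ an isomorphism (harmless, since the parity invariant of an isogeny is insensitive to base extension), one takes any isogeny $\phi: A\to B$ and forms $\lambda'=\phi^\vee\circ\lambda\circ\phi: A\to A^\vee$. The key input is that Cartier duality preserves the order of a finite group scheme and preserves the local-local part, so $\#\bigl(\ker(\phi^\vee)\cap B^\vee[p^\infty]_{ss}\bigr)=\#\bigl(\ker(\phi)\cap A[p^\infty]_{ss}\bigr)$; hence $\#\ker(\lambda')_{ss}$ is a perfect square. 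If you want to salvage your route, you would need to prove the $\F_q$-self-duality of $\G_{ss}$ directly, e.g.\ by a Dieudonn\'e-module computation using the relation $\pi_{ss}\overline{\pi}_{ss}=q$ to pin down the descent datum of the dual --- but at that point you are doing comparable work to the paper's Cartier-duality argument.
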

\begin{proof}
Whether or not two isogenous abelian varieties are in the same equivalence class doesn't depend on the field of definition. Therefore we may replace $\F_q$ with a finite extension, and assume the existence of a principally polarized $B$ isogenous to $A$. Let $\lambda: B \rightarrow {B^\vee}$ denote a principal polarization. Let $\phi: A \rightarrow B$ denote an isogeny, and let ${\phi^\vee}: {B^\vee} \rightarrow {A^\vee} $ denote its dual. Then, the map $\lambda' = {\phi^\vee} \circ \lambda \circ \phi: A \rightarrow {A^\vee}$ is an isogeny from $A$ to its dual (in fact, a polarization). Further, the finite flat group schemes $\ker(\phi) \cap A[p^{\infty}]_{ss}$ and $\ker({\phi^\vee}) \cap {A^\vee}[p^{\infty}]_{ss}$ have the same cardinality, and so $\ker(\lambda')\cap A[p^{\infty}]_{ss}$ has cardinality a perfect square. Therefore, $A$ and ${A^\vee}$ are in the same equivalence class. 
\end{proof}

We now no longer assume that $A$ is inert. 
\begin{proposition}\label{polarbear}
The canonical lift of ${A^\vee}$ is the dual of the canonical lift of $A$. 
\end{proposition}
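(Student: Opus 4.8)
The plan is to reduce the statement to a compatibility between the duality on abelian varieties (in characteristic zero and in characteristic $p$) and our explicit construction of canonical lifts, and then to check this compatibility on $p$-divisible groups. First I would recall that Deligne's theory of canonical lifts for the ordinary part is already compatible with duality: if $\mathcal{H}$ is an ordinary $p$-divisible group with canonical lift $\tilde{\mathcal{H}}$, then the Serre dual $\tilde{\mathcal{H}}^\vee$ is the canonical lift of $\mathcal{H}^\vee$, since duality swaps the \'etale and multiplicative parts and the canonical lift is characterized by every endomorphism lifting (equivalently, by the splitting of the connected-\'etale sequence persisting). So the only content is in the supersingular part $\mathcal{G}_s$, and I would reduce to showing: the Serre dual of the lift $\tilde{\mathcal{G}}_s$ constructed in Lemma \ref{ss} is a lift of $\mathcal{G}_s^\vee$ (which is again one-dimensional supersingular) through which the action of $\mathcal{O}_{ss}$ lifts — and in the ramified case, that it is the lift belonging to the \emph{same} member of the pair, once the compatible choice of CM type has been fixed as in Section \ref{ramifiedclasses}.

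The key steps, in order: (1) Observe that a polarization, or more generally any isogeny $A \to A^\vee$, is in particular an $\F_q$-isogeny in the class $\mathcal{C}_h$ (the Frobenius polynomial of $A^\vee$ equals that of $A$ since $\pi_{A^\vee}$ is conjugate to $q/\pi_A$, and $h$ is self-reciprocal up to the substitution $x \mapsto q/x$). (2) Invoke Proposition \ref{functoriality} (in the ramified case) or the functoriality established in Proposition \ref{inertclassification} (in the inert case, after Proposition \ref{polinert} places $A^\vee$ in the same equivalence class) to conclude that \emph{every} isogeny in the class lifts; in particular, if we knew a single abelian variety $B$ in $\mathcal{C}_h$ admitted a principal polarization $\lambda: B \to B^\vee$, the lift $\tilde{\lambda}$ would exhibit $\tilde{B}^\vee$ as a canonical lift of $B^\vee$, and then the independence statement of Proposition \ref{liftwelldefined} would propagate this to all of $\mathcal{C}_h$. (3) To produce such a $B$, pass to a finite extension of $\F_q$ as in the proof of Proposition \ref{polinert} — this does not affect whether the canonical lift of the dual is the dual of the canonical lift, since base change commutes with our construction — and use that some abelian variety in the isogeny class is principally polarized. (4) Alternatively, and perhaps more cleanly, argue directly on Dieudonn\'e modules: the Dieudonn\'e module of $\mathcal{G}_s^\vee$ is the dual $\D^\vee$ with $F$ replaced by the dual of $V$, and one checks that the filtration $Fil^\vee \subset \D^\vee \otimes R$ cutting out $\tilde{\mathcal{G}}_s^\vee$ solves the analogue of the eigenvector equation \eqref{quad} with $a \mapsto -a$ (i.e. the trace-zero generator of $\mathcal{O}_{ss}$ is sent to its negative under duality, which is an automorphism of $\mathcal{O}_{ss}$), hence is again one of the admissible lifts, and in the ramified case corresponds to the choice compatible with the CM type $\Phi$ — here one uses that the dual CM type $\bar\Phi$ need not equal $\Phi$, but the complex abelian variety attached to $A^\vee$ has CM type $\bar\Phi$, and under the identification of $H_1(\tilde{A}^\vee \otimes_\epsilon \C)$ with the $\Z$-linear dual this is exactly what Grothendieck--Messing predicts.

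The main obstacle I expect is bookkeeping the ramified case: one must be careful that "the" canonical lift of $A^\vee$ means the one singled out by the \emph{same} compatible choice fixed for all of $\mathcal{C}_h$ in Section \ref{ramifiedclasses}, and that this choice is the one whose associated CM type is $\Phi$ (so that the dual, with CM type $\bar\Phi$, is still internally consistent). The cleanest way around this is to note that the pair of canonical lifts of $A^\vee$ is $\{(\tilde{A}_1)^\vee, (\tilde{A}_2)^\vee\}$ where $\tilde{A}_1,\tilde{A}_2$ are the two canonical lifts of $A$ — duality is an order-two operation permuting the two-element set of lifts of $\mathcal{G}_s$ — and then the assertion is simply that duality exchanges the two members in a manner compatible with how it acts on CM types, which follows formally once Step (4) identifies the filtrations. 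I would present the proof via Step (4) for conceptual transparency, falling back on Steps (1)--(3) to handle the compatibility of the discrete data $(T,F)$ with duality.
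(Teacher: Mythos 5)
Your core reduction is sound and, once stripped of the scaffolding, is essentially the paper's argument: the Serre dual of the canonical lift is a lift of $A^\vee$ to which the whole endomorphism ring (in particular $\Ok_{ss}$ on the supersingular part) lifts, so the uniqueness clause of Lemma \ref{ss} forces it to be a canonical lift of $A^\vee$. The paper's proof is exactly this three-line observation and nothing more; your Step (4), which re-verifies the claim by computing that the annihilator filtration in $\D^\vee\otimes R$ satisfies the eigenvector equation \eqref{quad} with $a\mapsto -a$, is a legitimate and more explicit route to the same conclusion, and has the virtue of making the ramified two-element ambiguity visible at the level of filtrations. By contrast, Steps (1)--(3) do not carry weight and should be dropped: the lift $\tilde{\lambda}$ furnished by Proposition \ref{functoriality} is a map $\tilde{B}\rightarrow \widetilde{B^\vee}$ where $\widetilde{B^\vee}$ is \emph{defined} as a quotient $\tilde{A}/\tilde{H}$, so it cannot by itself ``exhibit $\tilde{B}^\vee$ as a canonical lift of $B^\vee$'' --- identifying $\widetilde{B^\vee}$ with $\tilde{B}^\vee$ is precisely the content of the proposition, and invoking the lifted polarization here is circular. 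The one genuine delicacy, which you correctly isolate, is the ramified case: there one must check that $(\tilde{A})^\vee$ is the member of the two-element set of canonical lifts of $A^\vee$ singled out by the compatible choice of Section \ref{ramifiedclasses}; the paper leaves this implicit, and your CM-type bookkeeping via Step (4) is a reasonable way to nail it down.
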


\begin{proof}
Let $\tilde{A}$ denote the canonical lift of $A$. Consider ${(\tilde{A})^\vee}$, the dual of $\tilde{A}$ --  its special fiber is the dual of $A$, and hence ${(\tilde{A})^\vee}$ is a lift of ${A^\vee}$. ${(\tilde{A})^\vee}$ has the same endomorphism ring as the canonical lift of $\tilde{A}$, and hence ${A^\vee}$. Therefore it follows that ${\tilde{A}^\vee}$ is the canonical lift of ${A^\vee}$ as required. 

\end{proof}

\begin{definition}
Suppose $(T,F) \in \mathcal{L}_h$ is an almost ordinary Deligne module. Then we define the \emph{dual} module following \cite{Howe} as $(T,F)^{\vee} = (T^{\vee},F^{\vee})$ where $T^{\vee}$ is the $\Z$-module $\Hom_\Z (T,\Z)$ and $F^\vee$ is the endomorphism of $T^\vee$ such that $(F^\vee \psi)(t) = \psi(Vt),$ for all $\psi \in T^\vee$ and $t \in T.$
\end{definition} 
It is easy to see that the dual pair $(T,F)^\vee$ is indeed an almost ordinary Deligne module with the same Frobenius polynomial $h$, and that $\,^\vee: \mathcal{L}_h \rightarrow \mathcal{L}_h$ defines a functor. We also remark that a complex structure on $T\otimes \R$ determines the natural complex structure on $T^\vee \otimes \R = \Hom_\R(T\otimes \R, \R)$ given by $(z\cdot f)(t) = f(\overline{z}\cdot t)$ for $f \in T^\vee \otimes \R$, $t \in T\otimes \R$ and $z\in \C.$ In this manner, the $g$-many complex eigenvalues of $F\otimes \R$ and $F^\vee\otimes \R$ are the same.

\begin{proposition}
Suppose $A^\vee$ denotes the (almost ordinary) abelian variety over $\F_q$ dual to $A$. Then,
$(T(A^\vee),\pi_{A^\vee})$ is the dual of the pair $(T(A),\pi_A)$ defined above.
\end{proposition}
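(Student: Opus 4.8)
The plan is to identify $T(A^\vee)$ with $T(A)^\vee$ compatibly with the Frobenius endomorphisms, by transporting the classical duality between an abelian variety and its dual through the canonical lift. First I would invoke Proposition \ref{polarbear}, which tells us that the canonical lift of $A^\vee$ is $\tilde{A}^\vee$, the dual (in the sense of abelian schemes over $W(\overline{\F}_p)[\sqrt{p}]$, or over $W(\F_q)$ after descent) of the canonical lift $\tilde{A}$ of $A$. Passing to the complex fiber via the fixed embedding $\epsilon$, we have $\tilde{A}^\vee \otimes_\epsilon \C = (\tilde{A}\otimes_\epsilon \C)^\vee$, and the standard theory of complex abelian varieties gives a canonical isomorphism $H_1((\tilde{A}\otimes_\epsilon\C)^\vee, \Z) \cong H^1(\tilde{A}\otimes_\epsilon \C,\Z) = \Hom_\Z(H_1(\tilde{A}\otimes_\epsilon\C,\Z),\Z)$; that is, $T(A^\vee)\cong T(A)^\vee$ as $\Z$-modules.

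Next I would check that this identification intertwines the Frobenius endomorphisms correctly, i.e. that it carries $\pi_{A^\vee}$ to $(\pi_A)^\vee = \pi_A^\vee$ in the sense of the Deligne-module dual (the adjoint of the Verschiebung $V_A$ with respect to the pairing). The Frobenius $\pi_{A^\vee}$ of $A^\vee$ is the dual isogeny of the Verschiebung $V_A$ of $A$, by the standard identity $\pi_{A^\vee} = V_A^\vee$ (equivalently $\pi_A^\vee = V_{A^\vee}$, since $\pi \circ V = V\circ \pi = q$); these isogenies lift to $\tilde{A},\tilde{A}^\vee$ by the functoriality already established (Proposition \ref{functoriality} in the ramified case, Proposition \ref{inertclassification} in the inert case), and the duality isomorphism on $H_1$ is functorial for isogenies, so the endomorphism induced by $\pi_{A^\vee}$ on $T(A^\vee) = T(A)^\vee$ is exactly the adjoint of the endomorphism induced by $V_A$ on $T(A)$ — which is precisely the definition of $F(A)^\vee$, given that $F(A)$ and $V_A$ are $q$-complementary on $T(A)$.

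There is one genuine subtlety to address, namely that in the ramified case $A$ and $A^\vee$ each have \emph{two} canonical lifts, and we must confirm that $\tilde{A}^\vee$ is the member of the fixed compatible family of canonical lifts of $A^\vee$ — otherwise $T(A^\vee)$ as defined in the paper (via the chosen compatible lift) need not agree with $H_1(\tilde{A}^\vee\otimes_\epsilon\C,\Z)$. This is where I would lean on the discussion at the start of Section 4: the compatible choice of lifts amounts to a compatible CM type $\Phi$ on the common endomorphism field $K$, and since $\End^0(A) = \End^0(A^\vee) = K$ (the duality isogeny $A\to A^\vee$ after a polarization identifies them), the CM type $\Phi$ is the \emph{same} for $A$ and $A^\vee$; and the CM type attached to $\tilde{A}^\vee$ is the complex conjugate of that attached to $\tilde{A}$ — but a CM type and its complex conjugate both determine the \emph{same} splitting of the two slope-$1/2$ places (they swap the two, which is the same unordered partition only if... ) — so I would instead argue directly that the dual lift $\tilde{A}^\vee$ lies in the compatible family because the assignment $B\mapsto \tilde{B}$ and $B\mapsto \tilde{B}^\vee$ are both functors on $\mathcal{C}_h$ agreeing with each other's composition with $\vee$, by Proposition \ref{polarbear} applied uniformly. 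I expect this compatibility bookkeeping in the ramified case to be the main obstacle; the complex-analytic duality and the Frobenius--Verschiebung identity are formal once it is in place. Finally I would remark that the complex structures match up as stated in the paragraph preceding the proposition (the complex structure on $T^\vee\otimes\R$ is the conjugate-dual one), so that the $g$ complex eigenvalues of $F^\vee\otimes\R$ coincide with those of $F\otimes\R$, confirming that $(T(A^\vee),\pi_{A^\vee})$ is an object of $\mathcal{L}_h$ equal to $(T(A),\pi_A)^\vee$.
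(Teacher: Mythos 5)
Your argument is correct and is essentially the proof the paper invokes by reference: the paper's proof is the one-line citation of Howe's Proposition 4.5, whose argument in this setting is exactly what you spell out --- the dual of the canonical lift is the canonical lift of the dual (Proposition \ref{polarbear}), $H_1$ of the dual complex abelian variety is the $\Z$-linear dual of $H_1$, and $\pi_{A^\vee}=(V_A)^\vee$ acts as the transpose of $V_A$, matching the definition $(F^\vee\psi)(t)=\psi(Vt)$. Your inconclusive digression about CM types in the ramified case can simply be dropped, since Proposition \ref{polarbear} is stated after the compatible family of lifts has been fixed and therefore already supplies the needed compatibility.
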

\begin{proof}
The same argument as in \cite[Proposition 4.5]{Howe}.
\end{proof}

Let $\pi \in K$ denote the Frobenius of the isogeny class $\mathcal{C}_h,$ and let $R\subseteq K$ be the smallest order containing $\Z[\pi,q/\pi]$ such that $R \otimes \Z_p$ contains $\mathcal{O}_{ss}$. For an almost ordinary Deligne module $(T,F) \in \mathcal{L}_h$ we may view the isomorphism class of $T$ as an $R$-fractional ideal $I \subset K.$ In fact, it is clear that the isomorphism classes of objects of $\mathcal{L}_h$ are in bijection with the ideal class monoid $\text{ICM}(R)$ of the order $R.$ In the spirit of \cite[Theorem 4.3.2]{marseglia} we rephrase our results in these terms:
\begin{theorem}\label{everything}
For a simple almost ordinary abelian variety $A \in \mathcal{C}_h$, let $I_A \subset K$ denote the associated fractional ideal. Then: 
\begin{enumerate}
\item the dual variety $A^\vee$ corresponds to the fractional ideal $\overline{I}_A^t$ - the CM-conjugate of the trace-dual to $I_A.$
\item $\End_{\F_q}(A)$ is the ring $[I_A:I_A] = \{\lambda \in K : \lambda\cdot I_A \subseteq I_A\}.$
\item \begin{enumerate}
    \item There is a bijection between the $\F_q$-isomorphism classes of varieties in $\mathcal{C}_h$ ($\mathcal{C}_{i,h}$ in the inert case) and the ideal class monoid $\text{ICM}(R)$
    \item $R$ is a Gorenstein order\footnote{By \cite{K}, it suffices to check that $R \otimes \Z_{\ell}$ is Gorenstein for every prime $\ell$. For every $\ell \neq p$, the order is monogenic and hence Gorenstein. For $\ell = p$, it is easy to see that the local order is a direct sum of monogenic rings, and hence is Gorenstein.} and thus there is a bijection between the $\F_q$-isomorphism classes of varieties in $\mathcal{C}_h$ ($\mathcal{C}_{i,h}$ in the inert case) having endomorphism ring exactly $R$ and the ideal class group $\Cl(R).$ 
\end{enumerate}
\item The data of a polarization on $A$ is the same as a $\lambda \in K^\times$ such that: 
\begin{itemize}
    \item the bilinear form on $K$ defined by $(x,y)_{\lambda} := \Trace_{K /\Q}(\lambda x \bar{y})$ is integral on $I_A.$
    \item $\lambda$ is purely imaginary
    \item $\phi(\lambda) / i$ is a positive real number, for $\phi \in \Phi$
\end{itemize}
The polarization corresponding to such a $\lambda$ is principal if and only if the lattice $I_A$ is self-dual for the bilinear form $(\ ,\ )_{\lambda}.$
\item Two pairs $(I,\lambda)$ and $(I',\lambda')$ give rise to isomorphic polarized varieties if and only if there exists $\nu \in K^\times$ such that $I' = \nu I$ and $\lambda = \nu\bar{\nu} \lambda'$.
\end{enumerate}
\end{theorem}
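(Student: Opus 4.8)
The plan is to leverage the equivalence of categories established in Theorem \ref{class} (Propositions \ref{inertclassification} and \ref{ramifiedclassification}) together with the description of $\mathcal{L}_h$ in terms of $R$-fractional ideals, and to import the polarization formalism of \cite{Howe} and \cite{marseglia} essentially verbatim, checking only that the almost ordinary setting introduces no obstruction. I would carry out the items in the order listed, as several depend on the earlier ones.

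\textbf{Items 1--2.} For the identification $\End_{\F_q}(A) = [I_A : I_A]$: by the categorical equivalence, $\End_{\F_q}(A) \cong \End_{\mathcal{L}_h}(T(A), F(A))$, and an endomorphism of the Deligne module is precisely a $\Z$-linear, $F$-commuting endomorphism of $T(A)$; since $T(A) \otimes \Q$ is a rank-one $K$-vector space (as $A$ is simple, $K$ being the full CM field of degree $2g$), such endomorphisms are exactly multiplication by elements $\lambda \in K$ with $\lambda I_A \subseteq I_A$, which is $[I_A:I_A]$ by definition. For item 1, combine the two previous propositions in this section: $A^\vee$ corresponds to $(T(A)^\vee, F(A)^\vee) = (\Hom_\Z(T(A),\Z), F^\vee)$, and the standard computation — identical to \cite[Proposition 4.9]{Howe} or \cite[\S4]{marseglia} — shows that $\Hom_\Z(I_A,\Z)$, with the $\Q$-bilinear pairing $\Trace_{K/\Q}$, is the trace-dual fractional ideal $I_A^t$, and that the action of $F^\vee = V$ (via $FV = q$) twists this by complex conjugation, giving $\overline{I_A^t}$. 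I would simply cite \cite{Howe} here rather than reproduce the computation.

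\textbf{Item 3.} Part (a) is a restatement: isomorphism classes in $\mathcal{L}_h$ are in bijection with isomorphism classes of $R$-fractional ideals, which is by definition $\mathrm{ICM}(R)$ (the condition $4^*$ forces $R \otimes \Z_p \supseteq \mathcal{O}_{ss}$, and conversely every such ideal gives a valid $(T,F)$, by Proposition \ref{surjectivity} refined as in the inert-case discussion); combined with the equivalence of categories this descends to $\F_q$-isomorphism classes of abelian varieties (in $\mathcal{C}_{i,h}$ in the inert case). Part (b): the footnote already sketches that $R$ is Gorenstein — $R\otimes\Z_\ell$ is monogenic for $\ell\neq p$ hence Gorenstein, and at $p$ the order splits as $\mathcal{O}_{ss}\oplus R_{0}\oplus R_1$ with each summand monogenic (étale and toric parts are unramified étale $\Z_p$-algebras, hence monogenic, and $\mathcal{O}_{ss}$ is the maximal order in a quadratic field, hence monogenic) — so by \cite{K} the fractional ideals with multiplicator ring exactly $R$ form a torsor under $\Cl(R)$, giving the stated bijection.

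\textbf{Items 4--5.} This is the heart of the matter and the step I expect to be the main obstacle, though it should be surmountable. The data of a polarization on $A$ is an isogeny $\lambda : A \to A^\vee$ with $\lambda^\vee = \lambda$ coming from an ample line bundle. Under $\mathfrak{T}_i$ (or $\mathfrak{T}$), and using that the canonical lift of $A^\vee$ is the dual of the canonical lift of $A$ (Proposition \ref{polarbear}), this transports to a morphism $(T(A),F) \to (T(A)^\vee, F^\vee)$, i.e. a Riemann form on the complex abelian variety $\tilde A \otimes_\epsilon \C$, which by the classical theory of CM abelian varieties (and the fixed CM type $\Phi$) is given by an element $\lambda \in K^\times$ with the three stated properties: $\Trace_{K/\Q}(\lambda x \bar y)$ integral on $I_A$ (the Riemann form is integral on the lattice), $\lambda$ purely imaginary ($\lambda^\vee = \lambda$ translates to $\bar\lambda = -\lambda$ once one fixes the sign convention from the pairing $(F^\vee\psi)(t) = \psi(Vt)$), and $\phi(\lambda)/i > 0$ for all $\phi \in \Phi$ (positivity/ampleness, i.e. the Riemann form is positive-definite for the complex structure given by $\Phi$). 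Conversely any such $\lambda$ yields a polarization, because the corresponding Riemann form lifts to an ample line bundle on $\tilde A \otimes \C$, descends (the ideal $I_A$ being $F$-stable the line bundle is Frobenius-equivariant) to a lift over the ramified-quadratic base and reduces to a polarization of $A$ — here I must be slightly careful that algebraicity/descent of the line bundle from $\C$ is fine, which follows since $\tilde A$ is an abelian scheme over $W(\F_q)[\sqrt p]$ and a polarization of the generic fiber spreads out and specializes. Principality is then precisely self-duality of $I_A$ under $(\ ,\ )_\lambda$: the induced $\lambda : A \to A^\vee$ has degree equal to $[I_A^\vee : I_A]$ under the pairing, so $\deg\lambda = 1$ iff $I_A = I_A^\vee$. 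Finally, item 5 is the bookkeeping of isomorphisms of polarized varieties: an isomorphism $(I,\lambda) \xrightarrow{\sim} (I',\lambda')$ is multiplication by $\nu \in K^\times$ with $\nu I = I'$, and compatibility of the Riemann forms $(x,y)_\lambda = (\nu x, \nu y)_{\lambda'} = (x,y)_{\nu\bar\nu\lambda'}$ forces $\lambda = \nu\bar\nu\lambda'$, and conversely; this is immediate from items 1--4. The one genuine subtlety throughout is pinning down the sign/conjugation conventions so that ``$\lambda$ purely imaginary with $\phi(\lambda)/i > 0$'' is exactly the image of an ample symmetric isogeny — I would handle this by following \cite[Theorem 4.3.2]{marseglia} and \cite{Howe} conventions verbatim, since the almost ordinary canonical lift behaves formally like the ordinary Serre--Tate lift with respect to duality by Proposition \ref{polarbear}.
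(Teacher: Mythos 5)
Your proposal is correct and follows essentially the same route as the paper: items 1--3 are read off from the equivalence of categories and the earlier duality propositions, and items 4--5 are handled by identifying $\Hom(A,A^\vee)$ with $\Hom(\tilde A,\widetilde{A^\vee})=\Hom(\tilde A,(\tilde A)^\vee)$ via Propositions \ref{polarbear}, \ref{functoriality} and \ref{polinert}, then translating polarizations of $\tilde A_{\C}$ into Riemann forms on $T(A)$ and invoking classical CM theory (the paper cites Milne's notes where you cite Howe/Marseglia). Your extra care about descent and algebraization of the line bundle from $\C$ fills in a step the paper compresses into the assertion that an element of $\Hom(A,A^\vee)$ is a polarization of $A$ if and only if it is one of $\tilde A$.
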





\begin{proof}
It only remains to prove the last facts regarding polarizations. We will prove that the set of all polarizations on $\tilde{A}$ is the same as the set of all polarizations on $A$. The proposition follows from this statement. Indeed, a polarization on $\tilde{A}_{\C}$ is the same as a Riemann form on $T(A) = H_1(\tilde{A}_{\C}(\C),\Z)$, and is principal if and only if the associated form is self-dual on $T(A)$. A Riemann form on $T(A)$ is the same data as in the statement of this proposition (see \cite[Example 2.9]{Milne}). 

Therefore, it suffices to prove that the set of all polarizations on $\tilde{A}$ is the same as the set of all polarizations on $A$. By Proposition \ref{polarbear}, $\widetilde{{A^\vee}} = {(\tilde{A})^\vee}$, and we further have that $\Hom(\tilde{A}, \widetilde{{A^\vee}}) = \Hom(A,{A^\vee})$ (this follows from Proposition \ref{functoriality} in the ramified case, and Proposition \ref{polinert} in the inert case). Finally, an element $\alpha \in \Hom(A,{A^\vee})$ is a polarization on $A$ if and only if it is a polarization of $\tilde{A}$.
\end{proof}

\section{Size of isogeny classes}
Let $A$ denote a simple $g$-dimensional almost ordinary abelian variety over $\F_q$, where $g \geq 2$. 
\begin{definition}
Define $I(A,q^n)$ to be the set of principally polarized abelian varieties over $\F_{q^n}$ isogenous to $A$. 
\end{definition}
The goal of this section is to prove the following theorem:
\begin{theorem}\label{lastmain}
Suppose that the Frobenius torus of $A$ has full rank. Then we have the lower bound
$I(A,q^n) \geq q^{n[g(g+1)/2 - 1 + o(1)]/2}$ for a positive density set of $n$.
\end{theorem}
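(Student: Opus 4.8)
The plan is to count principally polarized abelian varieties in the isogeny class $\mathcal{C}_h$ over $\F_{q^n}$ via the parametrization from Theorem \ref{everything}: such varieties correspond to pairs $(I,\lambda)$ with $I$ an $R_n$-fractional ideal (where $R_n$ is the analogous order for the base-changed isogeny class over $\F_{q^n}$) and $\lambda \in K^\times$ purely imaginary, $\Phi$-positive, with $I$ self-dual for $(\,,\,)_\lambda$, modulo the equivalence $I' = \nu I$, $\lambda = \nu\bar\nu\lambda'$. First I would fix a convenient ideal $I$ (say $I = R_n$, or more generally any ideal whose multiplier ring is $R_n$) and reduce to counting $\lambda$'s producing principal polarizations on that fixed lattice up to the action of units $R_n^\times$, i.e. up to $\nu\bar\nu$ for $\nu$ a unit of the multiplier ring. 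The key point, exactly as in the ordinary case treated in \cite{AJ}, is that the set of admissible $\lambda$ is (essentially) the set of totally positive — under $\Phi$ — elements of the trace-dual lattice, intersected with the purely imaginary locus, and counting these in a box of size $q^{n\cdot\text{(something)}}$ is a lattice-point count.

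Second, I would set up the volume/lattice-point estimate. After base change to $\F_{q^n}$, the Frobenius is $\pi^n$, and the relevant real torus in which $\lambda$ lives — the purely imaginary part of $K\otimes\R$, on which the positivity conditions cut out a cone — has dimension $g$ (there are $g$ complex places). The constraint that the polarization be a \emph{polarization of $A$ over $\F_{q^n}$}, i.e. that $\lambda$ be integral on $I$ and that $I$ be self-dual, pins $\lambda$ to a coset of a lattice whose covolume grows like a power of $q^n$ governed by the discriminant data; the Weil bound forces $|\phi(\lambda)|$ to be comparable to $q^{n/2}$ up to the action of units. The hypothesis that the Frobenius torus has full rank is what guarantees the unit group $R_n^\times$ (modulo roots of unity and the subgroup coming from $\Z[\pi^n,q^n/\pi^n]$) is as large as possible, so that the quotient of the $\lambda$-cone by $\nu\bar\nu$ has the expected "fundamental domain" of polynomial-in-$q^n$ size; this is precisely the mechanism by which one gets $q^{n(\dim\mathcal{A}_g - 1)/2}$ rather than something smaller, and it is why the statement is only claimed for a positive density set of $n$ (one needs $n$ for which the torus/unit behavior is generic, controlled by the splitting of $p$ and the Chebotarev-type condition underlying \cite[Section 3a]{Chi}).

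Third, I would count isomorphism classes. Having produced $\gtrsim q^{n[g(g+1)/2 - 1]/2}$ admissible $\lambda$ on the fixed lattice $I$, I must show that not too many of them become isomorphic — i.e. that the fibers of the map $\lambda \mapsto (I,\lambda)/\!\sim$ are small, of size $q^{o(n)}$. This follows because two $\lambda,\lambda'$ on the same $I$ give isomorphic polarized varieties only if $\lambda/\lambda' = \nu\bar\nu$ for a \emph{unit} $\nu \in [I:I]^\times = R_n^\times$, and the number of such $\nu$ fitting inside the relevant box is subexponential in $n$ (polynomial in $n$, by a regulator bound / Northcott-type argument). Quotienting the lattice-point count by this and by the number of real embeddings / sign choices costs only $q^{o(n)}$, giving the stated lower bound with the $o(1)$ absorbing these factors. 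Throughout, I would use $\dim\mathcal{A}_g = g(g+1)/2$, so the target exponent $n[g(g+1)/2-1+o(1)]/2$ is exactly $n(\dim\mathcal{A}_g - 1)(1/2 + o(1))$, matching the conjecture.

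The main obstacle I expect is the second step: carefully controlling the shape of the cone of admissible $\lambda$ after dividing by the unit action, and verifying that for a positive density of $n$ the "full rank Frobenius torus" hypothesis indeed delivers a unit group large enough to make the fundamental domain for $\nu\bar\nu$ of size $q^{o(n)}$ (so that the lattice-point count in the cone is not spuriously deflated). Making the density statement precise — translating "full rank" into a Chebotarev condition on $n$ and checking the exceptional set has density zero — is the delicate part; the self-duality (principality) constraint, by contrast, only changes the covolume of the relevant lattice by a bounded-discriminant factor and can be handled by a direct volume computation as in \cite{AJ}.
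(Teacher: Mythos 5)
Your plan has a structural flaw: you locate the exponential count in the wrong variable. On a \emph{fixed} lattice $I$, the self-duality condition for a principal polarization forces $\lambda$ to generate the fixed fractional ideal $\overline{I}^{\,t}I^{-1}$, so the admissible $\lambda$ form a single coset of the unit group $[I:I]^\times$ intersected with the positivity cone; up to the equivalence $\lambda \sim \nu\bar\nu\lambda$ this yields only $O(2^g)$ principal polarizations, uniformly in $n$ --- not $q^{n[g(g+1)/2-1]/2}$ of them. (A lattice-point count in a cone does appear if you drop self-duality, but then you are counting polarizations of unbounded degree, not principal ones.) The exponential growth in $I(A,q^n)$ comes entirely from varying the ideal class of $I$: the paper reduces to the kernel of the norm map $N:\Cl(R_n)\rightarrow \Cl^+(R_n^+)$ (Proposition \ref{isogclassgroup}) and then computes $\#\Cl(R_n)/\#\Cl^+(R_n^+)$ via a discriminant/index computation (Lemma \ref{size}), which is where the exponent $g(g+1)/2-1$ actually arises --- note the extra $-1$ relative to \cite{AJ} comes precisely from the index contribution of the slope-$1/2$ factor. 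Your proposal never produces this class-group count.

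Second, you omit the step that is genuinely new in the almost ordinary setting and occupies most of the paper's proof: showing that the relevant set is \emph{nonempty}, i.e.\ that some variety with endomorphism ring exactly $R_n$ carries a principal polarization. The trace-dual of $R_n$ with respect to the pairing $(x,y)\mapsto \Trace_{K/\Q}(\lambda_n x\bar y)$ is a power $\mathfrak{b}^m$ of the slope-$1/2$ prime $\mathfrak{b}$, and one needs $m$ even to extract a self-dual ideal $\mathfrak{b}^{m/2}$; the paper handles odd $m$ by a parity-of-polarization-degree argument in the ramified case and by an explicit isogeny construction (composing with quotients by \'etale and multiplicative $p$-torsion) in the inert case. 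Finally, you misattribute the role of the full-rank Frobenius torus hypothesis: it is used (as in \cite[Proposition 3.6]{AJ}) to guarantee that the positivity conditions $\phi(\lambda_n)/i>0$ for $\phi\in\Phi$ hold for a positive proportion of $n$, not to control the size of the unit group.
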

We expect that the Frobenius torus condition is unnecessary, and that the lower bound is actually an equality. Further, this condition is equivalent to the condition in the statement of \cite[Proposition 3.6]{AJ}. We thank Yunqing Tang for pointing this out to us. 

Let $\alpha$ denote a Weil $q$ integer corresponding to the isogeny class containing $A$. Let $R_n$ denote the smallest order inside $\Q(\alpha)$ containing $\alpha^n, q^n/\alpha^n$ and such that $R_n \otimes \Z_p$ contains $\Ok_{ss}$. We have proved that the set of abelian varieties over $\F_{q^n}$ isogenous to $A$ is in bijection with (or admits a 2-1 map onto) the set of equivalence classes of finitely generated $R_n$ submodules of $\Q(\alpha)$. 

Let $R_n^+$ denote the ring $\Z[\alpha^n + q^n/\alpha^n]$. The following proposition is the analogue of Proposition 3.4 in \cite{AJ}:

\begin{proposition}\label{isogclassgroup}
The subset of $I(A,q^n)$ with endomorphism ring exactly equal to $R_n$ is either empty, or admits a bijective\footnote{If the isogeny class is ramified.} / two-one\footnote{If the isogeny class is inert.} map onto the kernel of the norm map 
$$N: \Cl(R_n) \rightarrow \Cl^+(R_n^+).$$
Here, $\Cl^+(R_n^+)$ is the narrow class group of the totally real order $R_n^+$. 
\end{proposition}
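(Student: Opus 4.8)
The plan is to follow the template of \cite[Proposition 3.4]{AJ} and the analogous classical statement for ordinary abelian varieties, adapting it to the almost ordinary setting using our classification. By Theorem \ref{everything}(3b), the $\F_q$-isomorphism classes in $\mathcal{C}_h$ (or $\mathcal{C}_{i,h}$ in the inert case, which is the source of the two-one discrepancy) with endomorphism ring exactly $R_n$ are in bijection with $\Cl(R_n)$; the subset $I(A,q^n)$ consists of those classes which additionally carry a principal polarization, together with the data of that polarization up to isomorphism. So first I would recall from Theorem \ref{everything}(4)--(5) the description of a principal polarization on the variety corresponding to a proper $R_n$-ideal $I$ as a choice of $\lambda \in K^\times$ which is purely imaginary, $\Phi$-positive, and makes $I$ self-dual for $(x,y)_\lambda = \Trace_{K/\Q}(\lambda x \bar y)$, with $(I,\lambda) \sim (I',\lambda')$ iff $I' = \nu I$, $\lambda = \nu\bar\nu\lambda'$ for some $\nu \in K^\times$.

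Next I would set up the norm map. Writing $K^+ = \Q(\alpha^n + q^n/\alpha^n)$ for the maximal totally real subfield and $R_n^+ = \Z[\alpha^n + q^n/\alpha^n]$, there is a relative norm $N: \Cl(R_n) \to \Cl^+(R_n^+)$ sending the class of $I$ to the class of $I\bar I \cap K^+$ (equivalently, the ideal generated by $x\bar x$ for $x \in I$), landing in the narrow class group because $x\bar x$ is totally positive. The key claim to establish is: a proper $R_n$-ideal class $[I]$ lies in $\ker N$ if and only if the corresponding abelian variety admits a principal polarization. For the forward direction, $N([I])$ trivial in $\Cl^+(R_n^+)$ means $I\bar I = \mu R_n$ for a totally positive $\mu \in (K^+)^\times$; then one produces a purely imaginary, $\Phi$-positive $\lambda$ with $\lambda\bar\lambda^{-1}$-scaling making $I$ self-dual — concretely one takes $\lambda$ of the form $\delta^{-1}\mu^{-1}$ where $\delta$ generates the different/trace-dual of $R_n$ over $\Z$ up to the totally real part (here one must use that $R_n$ is Gorenstein, so that the trace-dual $R_n^\vee$ is an invertible $R_n$-module, hence $I^\vee = I^{-1} R_n^\vee$, reducing self-duality of $I$ to an equation in the class monoid), and then adjusts $\lambda$ by a totally positive unit of $R_n^+$ to fix the signs at the $\Phi$-places — this last step is exactly where the ``narrow'' in $\Cl^+$ is needed and where it might be only surjective onto, not bijective with, the principal polarizations, forcing the ``either empty'' alternative. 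Conversely, given a principal polarization $\lambda$ on $I$, self-duality forces $\lambda I^\vee = I^{-1}$ (using Gorenstein again), and taking relative norms shows $N([I])$ is trivial.

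Finally I would address counting/multiplicity. Two principally polarized varieties with the same underlying ideal class $[I] \in \ker N$ differ by the ambiguity in $\lambda$: the set of valid $\lambda$ for fixed $I$, modulo $\nu\bar\nu$ with $\nu \in \End_{\F_q}(A)^\times = R_n^\times$, is a torsor under $(R_n^+)^{\times,+}/N_{K/K^+}(R_n^\times)$ where the superscript $+$ denotes totally positive units; standard CM-field arguments (as in Howe's work, cited in the paper) show this group is trivial — or rather, that the choices give isomorphic polarized varieties — so each class in $\ker N$ yields exactly one element of $I(A,q^n)$. The remaining two-one phenomenon in the inert case is not about polarizations at all: it is inherited directly from Proposition \ref{inertclassification}, since $I(A,q^n)$ counts varieties in all of $\mathcal{C}_h$ while $\Cl(R_n)$ parametrizes only one subcategory $\mathcal{C}_{i,h}$, and Proposition \ref{polinert} guarantees $A^\vee$ lies in the same subcategory so the polarization condition is insensitive to which of the two subcategories a variety belongs to; hence each class in $\ker N$ with a principal polarization lifts to exactly two isomorphism classes of ppav over $\F_q$. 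The main obstacle I anticipate is the careful bookkeeping in the forward direction of the key claim — producing $\lambda$ with all three properties (purely imaginary, $\Phi$-positive, self-dualizing) simultaneously, which requires the Gorenstein invertibility of the trace-dual together with a sign-correction by totally positive units and a check that the CM type $\Phi$ (which in the ramified case depends on our fixed choice of canonical lift) is compatible with the positivity requirement.
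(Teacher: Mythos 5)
Your proposal is correct and follows essentially the same route as the paper, which simply deduces the statement from Theorem \ref{everything} and refers to \cite[Proposition 3.5]{AJ} for the Howe-style norm-map argument you spell out; your identification of the two-one multiplicity with the two inert subcategories $\mathcal{C}_{i,h}$ (via Proposition \ref{polinert}) and of the ``either empty'' alternative with the positivity obstruction is exactly the intended reading. (One small slip: multiplying $\lambda$ by a \emph{totally positive} unit of $R_n^+$ cannot change the signs of $\phi(\lambda)/i$; the sign adjustment uses general units of $R_n^+$, while total positivity of the generator $\mu$ is what the narrow class group supplies.)
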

The proof follows directly from Theorem \ref{everything}. For more details, see \cite[Proposition 3.5]{AJ}. 

We will also need the analogue of Lemma 3.7 in \cite{AJ}:

\begin{lemma}\label{size}
For a density-one set of positive integers $n$, we have $\frac{\#\Cl(R_n)}{\#\Cl^+(R^+_n)} = (q^{n/2})^{\frac{g(g+1)}{2} -1 + o(1)}$. 
\end{lemma}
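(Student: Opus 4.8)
The plan is to reduce the estimate for $\#\Cl(R_n)/\#\Cl^+(R_n^+)$ to an analytic class number formula computation, exactly paralleling the strategy of \cite[Lemma 3.7]{AJ}. The quotient $\Cl(R_n)/\Cl^+(R_n^+)$ measures the relative size of the class group of the CM order $R_n$ over the class group of its totally real suborder $R_n^+$; up to bounded index contributions coming from the difference between the orders $R_n, R_n^+$ and their maximal orders $\Ok_{K_n}, \Ok_{K_n^+}$ (where $K_n = \Q(\alpha^n)$ and $K_n^+ = \Q(\alpha^n + q^n/\alpha^n)$), this is controlled by the ratio of Dedekind zeta residues, i.e. by $L(1,\chi)$ where $\chi$ is the quadratic character cutting out $K_n/K_n^+$, together with regulator and root-number-of-units factors. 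The main term is $\frac{\#\Cl(R_n)}{\#\Cl^+(R_n^+)} \sim \frac{w_{K_n}^{-1} |d_{K_n}|^{1/2}}{2^{?}\, \mathrm{Reg}_{K_n}} \cdot \frac{\mathrm{Reg}_{K_n^+}}{|d_{K_n^+}|^{1/2}} \cdot L(1,\chi) \cdot (\text{bounded factors})$; since $K_n$ is CM of degree $2g$ over its totally real subfield of degree $g$, the unit groups have the same rank $g-1$ and the regulator quotient is $O(1)$ on a density-one set, while $|d_{K_n}|^{1/2}/|d_{K_n^+}| = |d_{K_n/K_n^+}|^{1/2} \cdot (\text{disc of } K_n^+)^{1/2 - 1/2}\cdots$ — more precisely the relative discriminant contributes the dominant $q^{n(g+1)g/4 - n/2 + o(1)}$, which after the square in the exponent matches the claimed $(q^{n/2})^{g(g+1)/2 - 1 + o(1)}$.

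First I would fix notation: write $h_n = h(x^n/\text{(stuff)})$ for the characteristic polynomial of $\alpha^n$, so that $K_n = \Q[x]/(h_n)$ is a CM field of degree $2g$ for a density-one set of $n$ (this uses the full-rank Frobenius torus hypothesis, which guarantees $[K_n : \Q] = 2g$ away from a density-zero set — cf. \cite[Proposition 3.6]{AJ}). Next I would invoke the analytic class number formula for both $K_n$ and $K_n^+$ and take the quotient, and separately pass between $R_n$ and $\Ok_{K_n}$ using the standard exact sequence relating $\Cl(R_n)$, $\Cl(\Ok_{K_n})$ and $(\Ok_{K_n}/\mathfrak{f})^\times / (R_n/\mathfrak{f})^\times$ where $\mathfrak{f}$ is the conductor; the conductor of $R_n$ is bounded in terms of the conductor of $R_1$ and the primes dividing $n$ (since $R_n \supseteq \Z[\alpha^n, q^n/\alpha^n]$ and we additionally adjoin $\Ok_{ss}$), so these local correction factors are $q^{o(n)}$ on a density-one set. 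The same applies on the totally real side. Then I would bound the unit regulator quotient $\mathrm{Reg}_{K_n}/\mathrm{Reg}_{K_n^+}$: both fields have unit rank $g-1$ (the CM field has no more units than its real subfield, up to roots of unity and index $\leq 2$ — the Hasse unit index $Q \in \{1,2\}$), so this ratio is bounded above and below by constants times $q^{o(n)}$. Finally I would bound $L(1, \chi_{K_n/K_n^+})$ both above and below by $q^{o(n)}$ — the upper bound is the classical $L(1,\chi) \ll \log(\text{conductor}) = O(n\log q)$, and the lower bound $L(1,\chi) \gg (\text{conductor})^{-o(1)}$ holds for a density-one set of $n$ by Siegel-type (or, to stay effective, by averaging) arguments; the conductor of $\chi$ is essentially the relative discriminant $d_{K_n/K_n^+}$, which is $q^{O(n)}$.

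The remaining and genuinely arithmetic step is computing the exponent: I need $|d_{K_n}| / |d_{K_n^+}|^2 = N_{K_n^+/\Q}(d_{K_n/K_n^+})$ and to show its logarithm is $\bigl(\tfrac{g(g+1)}{2} - 1\bigr) n \log q + o(n)$. Here I would use the factorization $h = h_0 h_1 h_{1/2}$ in $\Z_p[x]$ from condition $3^*$: the ramification of $K_n$ over $\Q$ is concentrated at primes dividing $q$ (the Weil integer $\alpha$ and its conjugate generate a field ramified only over $p$ and over primes where $h$ has repeated roots, a density-zero nuisance), and the $p$-adic valuations of the different are governed by the slopes — the slope-$0$ and slope-$1$ parts are unramified over $\Q_p$ (étale/toric), contributing nothing, while the slope-$1/2$ part $K_{ss}$ contributes a ramified quadratic extension, and the global discriminant at $\infty$-many archimedean-free primes is dominated by the contribution of $\alpha^n$ itself having $q$-adic valuations spread as $(g-1)$ zeros, $(g-1)$ ones and two halves. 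The bookkeeping here is precisely \cite[Lemma 3.7]{AJ}'s computation with $\dim\mathcal{A}_g = g(g+1)/2$ replaced by $g(g+1)/2 - 1$, the $-1$ coming from the codimension-one condition (one slope-$1/2$ pair instead of the ordinary spread). \textbf{The hard part} will be making the lower bound for $\#\Cl(R_n)$ genuinely unconditional on a positive-density (here claimed density-one) set of $n$: one must either cite an ineffective Siegel bound (losing the density-one, keeping positive density — which is all that is ultimately needed for Theorem \ref{lastmain}), or run a Gauss-sum/averaging argument over the family $\{R_n\}$ as in \cite{AJ} to get a density-one lower bound without effectivity issues. I would follow the latter, citing \cite[Lemma 3.7]{AJ} for the mechanism and only indicating the modifications forced by the almost-ordinary slope structure and by the extra generator $\Ok_{ss}$ in the definition of $R_n$.
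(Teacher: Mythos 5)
There is a genuine gap: you have located the main term in the wrong place. Under the full-rank Frobenius torus hypothesis, $\Q(\alpha^n)=\Q(\alpha)=K$ for exactly the density-one set of $n$ the lemma concerns, so along that set the fields $K_n$ and $K_n^+$ do not vary: their discriminants, the relative discriminant $d_{K_n/K_n^+}$, the regulators, and $L(1,\chi_{K_n/K_n^+})$ are all literally constant in $n$. Your proposed main term $\sqrt{|d_{K_n}|}/|d_{K_n^+}|$ is therefore $O(1)$, and the step where you would ``show its logarithm is $(g(g+1)/2-1)\,n\log q+o(n)$'' cannot succeed. Conversely, the factor you dismiss as $q^{o(n)}$ --- the contribution of the conductor $\mathfrak{f}_n$ of the order $R_n$ inside the fixed maximal order $\Ok_K$ --- is where \emph{all} of the growth lives: $[\Ok_K:\Z[\alpha^n,q^n/\alpha^n]]$ is comparable to $\sqrt{|\mathrm{disc}(\Z[\alpha^n])|/|d_K|}$ with $|\mathrm{disc}(\Z[\alpha^n])|=\prod_{i<j}|\alpha_i^n-\alpha_j^n|^2$ of size roughly $q^{ng(2g-1)}$; already for ordinary elliptic curves this index is about $q^{n/2}$, which is the entire isogeny-class count. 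Your claim that the conductor of $R_n$ is controlled by that of $R_1$ and the primes dividing $n$ is false: $R_n$ is a much smaller order than $R_1$, with index growing like a definite power of $q^n$.

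The paper's proof is precisely the computation your proposal skips. It takes as analytic input only that class numbers of orders are approximated by the square roots of their discriminants, writes $|\mathrm{disc}(R_n)|=|\mathrm{disc}(\Z[\alpha^n])|/[R_n:\Z[\alpha^n]]^2$, computes the index $[R_n:\Z[\alpha^n]]$ purely $p$-adically from the slope data (the factorization $f=f_ef_tf_s$ with roots of normalized valuation $0$, $1$, $1/2$ gives an explicit power of $q^n$, namely exponent $(g-1)(g-2)/2+g$), and then compares $\mathrm{disc}(\Z[\alpha^n])$ with $\mathrm{disc}(R_n^+)$ as in the last paragraph of \cite[Lemma 3.8]{AJ}. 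Two smaller corrections: in the inert case $K_{ss}$ is an \emph{unramified} quadratic extension of $\Q_p$, contrary to your assertion that the slope-$1/2$ part always contributes ramification; and the density-one restriction comes from requiring $\Q(\alpha^n)=K$, not from Siegel-zero issues --- Brauer--Siegel is ineffective but valid for all $n$, so no density is lost on the analytic side.
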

\begin{proof}
As $n$ tends to infinity, the class groups of both rings are well approximated by their root-discriminants. 

We first compute the index of $\Z[\alpha^n]$ inside $R_n$. As in \cite{AJ}, this index is a power of $p$, therefore it suffices to compute the corresponding index after tensoring both rings with $\Z_p$. Let $f(x)$ denote the minimal polynomial of $\alpha^n$, and let $f(x) = f_e(x)f_t(x)f_s(x)$ over $\Z_p$ correspond to the slope decomposition of $\G$. As $A$ is almost ordinary, $F_e$ and $f_t$ have degree $g-1$, and $f_s$ has degree two. Let $\beta_1 \hdots \beta_{g-1}$ denote the roots of $f_e$, $\gamma_1 \hdots \gamma_{g-1}$ denote the roots of $f_t$ and $\delta_1,\delta_2$ denote the roots of $f_s$. It follows that the $\beta_i$ are $p$-adic units, that $\frac{v_p(\gamma_i)}{v_p(q^n)} = 1$ and $\frac{v_p(\delta_i)}{v_p(q^n)} = 1/2$. 

Let $g_t$ denote the polynomial with roots $\gamma_i/q$ and $g_s$ denote the polynomial with roots $\delta_i/q^{1/2}.$ The index of $\Z_p[\alpha]$ inside $R_n \otimes \Z_p$ equals the index of $\Z_p[x]/f_t(x)f_s(x)$ inside $\Z_p[x]/g_t(x)g_s(x)$. This index equals the square-root of $\displaystyle \frac{\prod_{ij}(\gamma_i - \gamma_j)^2 \cdot \prod_{ij} (\gamma_i -\delta_j)^2 \cdot (\delta_1-\delta_2)^2}{\prod_{ij}(\gamma_i/q - \gamma_j/q)^2 \cdot \prod_{ij} (\gamma_i/q -\delta_j/q^{1/2})^2 \cdot (\delta_1/q^{1/2}-\delta_2/q^{1/2})^2}$. It is easy to see that the square root of this quotient equals $q^{(g-1)(g-2)/2 + g-1 + 1}$.

Therefore, it suffices to bound the quotient $\frac{d(\Z[\alpha^n])}{d(R^+_n)}$. The same argument as in the last paragraph of \cite[Lemma 3.8]{AJ} goes through verbatim to finish the proof of this result.

\end{proof}

We now prove Theorem \ref{lastmain}. By Lemma \ref{size} and Proposition \ref{isogclassgroup}, it suffices to prove that there is some principally polarized abelian variety with endomorphism ring equal to $R_n$ for a positive density set of $n$. 

\begin{proof}[Proof of Theorem \ref{lastmain}]
Let $h_n$ be the minimal polynomial of $\alpha^n + (q/\alpha)^n $, and let $\lambda_n = [(\alpha^n -(q/\alpha)^n )h'_n(\alpha^n + (q/\alpha)^n)]^-1$. Let $I\subset K$ denote any fractional ideal. The pairing 
\begin{equation}\label{tpair}
(x,y) \mapsto \Trace_{K/\Q}(\lambda_n x\bar{y}) 
\end{equation}
induces a polarization on $I$ precisely when the appropriate positivity conditions on $\lambda_n$ are satisfied. An argument identical to the one in \cite[Proposition 3.6]{AJ} proves that the appropriate positivity conditions hold for a positive proporition of $n$ (namely, a proportion of $\frac{1}{2^g}$). Therefore, we will assume that $n$ is an integer for which $\lambda_n$ satisfies these polarization conditions, and the existence of an abelian variety $A$ with endomorphism ring $R_n$, which is principally polarized. The theorem would follow from this.  We will treat the treat the case when $n = 1$, the same proof goes through verbatim. Let $\mathfrak{b} \subset \mathcal{O}_K$ be the unnique maximal ideal corresponding to the slope-half part of $A$. Let $R = R_1$. As $R$ is locally the maximal order at $\mathfrak{b}$, it follows that the $R$-ideal $\mathfrak{b} \cap R$ is invertible. Further, both $R$ and the ideal $\mathfrak{b}$ are stable under the action of complex conjugation on $K$. 

Let $R' = \Z[\alpha,q/\alpha]$. The dual of $R'$ with respect to the pairing \eqref{tpair} is $R'$ (see \cite[Proposition 9.5]{Howe}). As the orders $R$ and $R'$ agree away from the prime $\mathfrak{b}$, it follows that the dual of $R$ is a power of $\mathfrak{b}$, say $\mathfrak{b}^n$. As $\mathfrak{b}$ is stable under the action of conplex conjugation, the dual of $\mathfrak{b}$ with respect to \eqref{tpair} is $\mathfrak{b}^{n-1}$, the dual of $\mathfrak{b}^2$ is $\mathfrak{b}^{n-2}$, etc. If $n$ was an even integer, then the ideal $\mathfrak{b}^{n/2}$ is self dual, thereby yielding a principally polarized abelian variety, as required. We will now prove that if the isogeny class is ramified, then $n$ necessarily has to be even, and if the isogeny class is inert, we will produce an abelian variety which is principally polarized. 
\subsection*{The ramified case}
Suppose that $n$ were odd. Without loss of generality, we assume that $n = 1$. Therefore, there exists a polarization with degree equal  to the size of $R / \mathfrak{b}$. However, $R / \mathfrak{b}$ has size $p$, and the degree of a polarization is necessarily a square, yielding a contradiction. Therefore, $n$ had to have been even, yielding the required result in the ramified case. 
\subsection*{The inert case}
Again, we assume that $n = 1$. Let $A$ denote an abelian variety (in either equivalence class) corresponding to the ideal $R$. The polarization constructed is of the form $\lambda: A \rightarrow \check{A}$, and has kernel equal to the $p$-torsion of the supersingular part of $A[p^{\infty}]$. Let $B$ be the abelian variety such that $B / G = A$, where $G \subset B[p]$ is the $p$-torsion of the \'etale part of $B[p^{\infty}]$. Then, the dual isogeny from $\check{A}$ to $\check{B}$ has kernel equal to the $p$-torsion of the multiplicative part of $\check{A}[p^{\infty}]$. Therefore, the composite map from $B$ to $\check{B}$ has kernel $B[p]$, and thus $\check{B} = B / B[p] = B$. We have produced a principally polarized abelian variety $B$, isogenous to $A$! It is also clear that the endomorphism ring of $B$ equals that of $A$, whence the theorem follows.

\end{proof}

\bibliographystyle{alpha}

\end{document}